\begin{document}

\newtheorem{theorem}{Theorem}    
\newtheorem{proposition}[theorem]{Proposition}
\newtheorem{conjecture}[theorem]{Conjecture}
\def\theconjecture{\unskip}
\newtheorem{corollary}[theorem]{Corollary}
\newtheorem{lemma}[theorem]{Lemma}
\newtheorem{observation}[theorem]{Observation}
\theoremstyle{definition}
\newtheorem{definition}{Definition}
\newtheorem{remark}{Remark}
\newtheorem{question}{Question}
\def\thequestion{\unskip}
\newtheorem{example}{Example}
\def\theexample{\unskip}
\newtheorem{problem}{Problem}

\numberwithin{theorem}{section}
\numberwithin{definition}{section}
\numberwithin{equation}{section}
\numberwithin{remark}{section}

\def\intprod{\mathbin{\lr54}}
\def\reals{{\mathbb R}}
\def\integers{{\mathbb Z}}
\def\complex{{\mathbb C}\/}
\def\naturals{{\mathbb N}\/}
\def\rationals{{\mathbb Q}\/}
\def\distance{\operatorname{distance}\,}
\def\degree{\operatorname{degree}\,}
\def\kernel{\operatorname{kernel}}
\def\dim{\operatorname{dimension}\,}
\def\Span{\operatorname{span}\,}
\def\nullspace{\operatorname{nullspace}\,}
\def\ZZ{ {\mathbb Z} }
\def\e{\varepsilon}
\def\p{\partial}
\def\rp{{ ^{-1} }}
\def\Re{\operatorname{Re\,} }
\def\Im{\operatorname{Im\,} }
\def\ov{\overline}
\def\bx{{\bf{x}}}
\def\eps{\varepsilon}
\def\lt{L^2}
\newcommand{\norm}[1]{ \|  #1 \|}

\def\scriptx{{\mathcal X}}
\def\scriptb{{\mathcal B}}
\def\scripta{{\mathcal A}}
\def\scriptk{{\mathcal K}}
\def\scriptd{{\mathcal D}}
\def\scriptp{{\mathcal P}}
\def\scriptl{{\mathcal L}}
\def\scriptv{{\mathcal V}}
\def\scripti{{\mathcal I}}
\def\scripth{{\mathcal H}}
\def\scriptm{{\mathcal M}}
\def\scripte{{\mathcal E}}
\def\scriptt{{\mathcal T}}
\def\scriptb{{\mathcal B}}
\def\frakg{{\mathfrak g}}
\def\frakG{{\mathfrak G}}

\author{Michael Christ}
\address{
        Michael Christ\\
        Department of Mathematics\\
        University of California \\
        Berkeley, CA 94720-3840, USA}
\email{mchrist@math.berkeley.edu}
\thanks{This research was supported by NSF grants DMS-040126 and DMS-0901569.}

\date{October 28, 2008. Revised July 12, 2011.}

\title[Bounds for multilinear sublevel sets]
{Bounds for multilinear sublevel sets \\ via Szemer\'edi's theorem}


\maketitle

\section{Introduction}
Let $\ell_j:\reals^d\to\reals^{d_j}$ be surjective linear transformations,
let $P:\reals^d\to \reals$ be a real-valued polynomial, and 
let $\eta\in C^1_0(\reals^d)$ be a compactly supported,
continuously differentiable cutoff function.
For $\lambda\in\reals$ define the multilinear oscillatory integral forms
\begin{equation} \label{oscintegraldefn}
\scripti_\lambda(f_1,\cdots,f_n)
=\int_{\reals^d} e^{i\lambda P(y)} \prod_{j=1}^n f_j\circ\ell_j(y)
\eta(y)\,dy.
\end{equation}
Under what conditions do there exist $\delta>0$ and $C<\infty$
such that for all $f_j\in L^\infty(\reals^{d_j})$,
\begin{equation} \label{decay}
|\scripti_\lambda(f_1,\cdots,f_n)|
\le C|\lambda|^{-\delta}\prod_j\|f_j\|_{L^\infty}
\text{ for all $\lambda\in\reals$?}
\end{equation}
Under what conditions does there exist a function $\rho$
satisfying $\rho(\lambda)\to 0$ as $|\lambda|\to\infty$ such that
for all functions $f_j\in L^\infty$,
\begin{equation} \label{slowerdecay}
|\scripti_\lambda(f_1,\cdots,f_n)|
\le \rho(\lambda)\prod_j\|f_j\|_{L^\infty}
\text{ for all $\lambda\in\reals$?}
\end{equation}
The question in the form \eqref{decay} was
posed by Li, Tao, Thiele, and this author in \cite{multiosc},
where an affirmative answer was demonstrated under certain dimensional restrictions.
Nonoscillatory inequalities of the form
$\int \prod_j |f_j\circ\ell_j|\lesssim \prod_j\norm{f_j}_{L^{p_j}}$
have been studied in \cite{bcct1},\cite{bcct2}.
More refined questions about the optimal exponent $\delta$ in \eqref{decay},
and about inequalities with $\prod_j \norm{f_j}_{L^\infty}$
replaced by $\prod_j\norm{f_j}_{L^{p_j}}$, are of interest, but are at present premature.

Oscillatory integral inequalities of this type have been extensively studied 
for $n=2$, where one is dealing with bilinear 
forms $\langle T_\lambda(f_1),f_2\rangle$.
The associated linear operators $T_\lambda$ are commonly known 
as oscillatory integrals of the second type, and
a simple necessary and sufficient condition for \eqref{decay}
to hold (with some unspecified exponent) is known \cite{steinfatbook}.
There is an extensive literature dealing with more specific inequalities
involving $L^p$ norms, in which one seeks the optimal exponent
$\delta$ as a function of exponents $p$. 

For $n\ge 3$, however, there arises a class of singular
oscillatory integrals which have no direct analogues in the 
bilinear case.
These singular cases arise when $d<\sum_j d_j$. 
Generic ordered $n$-tuples of points
$(x_1,\cdots,x_n)\in \times_j \reals^{d_j}$
then do not contribute to the integral
$\scripti_\lambda$, which may alternatively be expressed as
\begin{equation}
\scripti_\lambda(f_1,\cdots,f_j) 
=\int_\Sigma
e^{i\lambda P(x)}
\prod_j f_j(x_j)\tilde\eta(x)\,d\sigma(x)
\end{equation}
for a certain linear subspace $\Sigma\subset \times_j \reals^{d_j}$
of positive codimension. Here $\tilde\eta$ is smooth and has compact
support, and $\sigma$ is Lebesgue measure on $\Sigma$.
Bilinear situations which superficially appear to be singular,
are always reducible to nonsingular ones in a certain sense, 
but this is not so for $n\ge 3$, in general. 
See \S\ref{section:bilinearcase} below for discussion of this point. 

To date little is known about the general singular multilinear case.
Some cases not covered in \cite{multiosc} have been treated,
after the essential completion of the present work,
in \cite{christdosilva} and \cite{christreduction},
but the general case is not accessible by the methods of those papers alone. 
There are indications that, as was emphasized for certain
related bilinear problems in \cite{ccw}, 
\eqref{decay},\eqref{slowerdecay} are
linked to combinatorial issues. 

An obvious necessary condition \cite{multiosc} for \eqref{slowerdecay} 
is that $P$ should be {\em nondegenerate}, relative to $\{\ell_j\}$,
in the sense that $P$ cannot be expressed in the form
$P = \sum_{j=1}^n p_j\circ\ell_j$ for any measurable functions $p_j$;
this is equivalent \cite{multiosc} to there being no such representation
in which $p_j$ are polynomials of degrees not exceeding the degree
of $P$. For the bilinear case $n=2$, 
nondegeneracy of $P$ is indeed sufficient for \eqref{decay}.
The main results of \cite{multiosc} asserted that for $n\ge 3$,
nondegeneracy of $P$ implies \eqref{decay}, under certain
rather restrictive
supplementary hypotheses. In particular, this holds when all $d_j=d-1$,
and it holds when all $d_j=1$ provided that $n<2d$. No example is known
to us in which a nondegenerate polynomial has been shown 
not to satisfy \eqref{decay}, let alone \eqref{slowerdecay},
but the vast majority of cases remain open.

In the present paper we do not answer these basic questions in any cases;
rather, we study a class of weaker inequalities 
\eqref{ineq:sublevel} which
would be implied by \eqref{slowerdecay}. 
We establish these inequalities
for all nondegenerate polynomials satisfying a natural
rationality hypothesis, whereas only quite restricted classes
of polynomials were treated in \cite{multiosc}. 
A second main result sheds additional light
on the meaning of nondegeneracy, 
by establishing its equivalence, under the rationality hypothesis,  
with a formally stronger property, 
which we call finitely witnessed nondegeneracy.
This furnishes an essential link with additive combinatorics.
With this equivalence in hand, the remainder of the proof
is a nearly direct application of
a generalization of Szemer\'edi's theorem due to 
Furstenberg and Katznelson \cite{furstkatz}.

This second main result is intended to serve as an essential step 
in an attack on oscillatory integral bounds. This
speculative scheme also involves inverse results for Gowers uniformity norms,
and is commented on briefly at the end of the paper.

\medskip
I am indebted to Diogo Oliveira e Silva for useful comments on the exposition.

\section{Results}
Let $\{\ell_j:\reals^d\to\reals^{d_j}\}$ be a finite collection of surjective
linear mappings.
For any Lebesgue measurable functions $g_j$ which are finite almost everywhere,
for any $\eps>0$, and for any compact subset $B\subset\reals^d$
consider the {\em sublevel sets}
\begin{equation} \label{sublevelsetsdefn}
E_\eps(P,g_1,\cdots,g_n)= \big\{
y\in B:
|P(y)-\sum_{j=1}^n g_j(\ell_j(y))|<\eps
\big\}.
\end{equation}
If a real-valued measurable function $P$ satisfies \eqref{decay},
then there is an upper bound for the
measures of these sublevel sets, of the form
\begin{equation} \label{ineq:sublevel}
\big| E_\eps(P,g_1,\cdots,g_n) \big| \le C\eps^\delta
\ \text{\em uniformly for all measurable functions $g_j$.}
\end{equation}
If instead $P$ satisfies \eqref{slowerdecay}, then there is 
a corresponding weakened version of \eqref{ineq:sublevel} in which
the right-hand side is replaced by a function of $\eps$ which tends
to zero as $\eps\to 0$.
Because of this connection with multilinear operators, we call
sets $E_\eps$ of the form \eqref{sublevelsetsdefn} multilinear sublevel sets. 

\eqref{ineq:sublevel}  
can be deduced from \eqref{decay} (with a smaller value of $\delta$ in \eqref{ineq:sublevel} in some cases).
To do so, fix a nonnegative cutoff function
$ h\in C^\infty_0(\reals)$ satisfying $h(t)=1$ whenever
$|t|\le 1$. Fix also $0\le\zeta\in C^\infty_0(\reals^m)$
such that $\zeta\equiv 1$ on $B$.
Then
\begin{multline*}
\big|\{x\in B: |P(x)-\sum_j g_j(\ell_j(x))|<\eps\}\big|
\le \int h[\eps\rp(P-\sum_j g_j\circ\ell_j)(x)]\,\zeta(x)\,dx
\\
= (2\pi)\rp \eps\int_\reals \widehat{h}(\eps\lambda)
\int e^{i\lambda(P(x)-\sum_j g_j(\ell_j(x))}\zeta(x)\,dx\,d\lambda.
\end{multline*}
Applying \eqref{decay} to the inner integral and continuing in 
a straightforward way leads to the sublevel set bounds.
\eqref{slowerdecay} leads in the same way to a corresponding variant of
\eqref{ineq:sublevel}. 

Our discussion relies on a different notion of degeneracy than
that defined above.
$f|_S$ will denote the restriction of a function $f$ to a set $S$.
\begin{definition}
Let $d,d_1,\cdots,d_n$ be arbitrary positive integers.
Let $P:\reals^d\to\complex$ be a polynomial,
and let $\ell_j:\reals^d\to \reals^{d_j}$ be
linear transformations for $1\le j\le n$.
$P$ is said to be {\em nondegenerate with a finite witness},
relative to $\{\ell_j\}$,
if there exists a finite set
$S\subset\reals^d$ 
such that $P|_S$ does not belong to the
span of the set of all functions $(f_j\circ\ell_j)|_S$.
\end{definition}

The union is taken over all indices $j$ and all functions $f_j$
before the span is formed.
An equivalent formulation is that there exist a finite set $S\subset\reals^d$ 
and scalars $c_s$ such that 
\begin{equation} \label{discrete1}
\sum_{s\in S} c_sP(s)\ne 0
\end{equation}
but 
\begin{equation} \label{discrete2}
\sum_{s\in S} c_s f_j(\ell_j(s))=0 \
\text{ for all indices $j$ and all functions $f_j$.}
\end{equation}
For generic finite sets $S$
the mapping $\ell_1(s)|_S$ is injective (unless $\ell_1\equiv 0$), 
whence no such scalars can exist.

The usefulness of discrete characterizations of nondegeneracy 
in the context of oscillatory integral theory
was recognized and exploited in \cite{ccw}.

From the theorem of Furstenberg and Katznelson we will deduce:

\begin{proposition}  \label{prop:sublevel1}
Suppose that a real-valued polynomial $P$
is nondegenerate with a finite witness, with respect to a finite collection
of surjective linear transformations
$\ell_j:\reals^d\to\reals^{d_j}$.
Then there exists a function $\Theta$ satisfying
\begin{equation} \label{Thetaproperty}
\lim_{\eps\to 0^+}\Theta(\eps)=0
\end{equation}
such that for every $\eps>0$ and any measurable functions $f_j$,
\begin{equation} \label{sublevelsetsbound}
|E_\eps(P,f_1,\cdots,f_n)|\le \Theta(\eps).
\end{equation}
\end{proposition}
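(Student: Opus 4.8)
The plan is to discretize the sublevel set and apply the Furstenberg--Katznelson multidimensional Szemer\'edi theorem to the resulting large subset of a lattice. Suppose, for contradiction, that the conclusion fails: then there exist $\rho>0$, a sequence $\eps_\nu\to 0^+$, and measurable functions $f_{j,\nu}$ such that $|E_{\eps_\nu}(P,f_{1,\nu},\cdots,f_{n,\nu})|\ge\rho$ for all $\nu$. The finite witness supplies a finite set $S=\{s_1,\cdots,s_K\}\subset\reals^d$ and scalars $c_s$ with $\sum_s c_s P(s)\ne 0$ while $\sum_s c_s g_j(\ell_j(s))=0$ for every $j$ and every function $g_j$. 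The central observation is that this identity is invariant under the affine symmetry group of $P$ in the following sense: if $y\in\reals^d$, $t>0$ are such that $y+t\,s\in B$ for all $s\in S$ and $|P(y+ts)-\sum_j f_{j,\nu}(\ell_j(y+ts))|<\eps_\nu$ for all $s\in S$, then, provided the configuration $\{y+ts:s\in S\}$ has been chosen so that the witness relation still kills each $f_{j,\nu}(\ell_j(\cdot))$ along it, one obtains $|\sum_s c_s P(y+ts)|\le (\sum_s|c_s|)\,\eps_\nu$. But $\sum_s c_s P(y+ts)$ is a nonzero polynomial in $(y,t)$ (nonzero because its value in a suitable limit recovers the leading coefficient $\sum_s c_s P(s)$ up to a power of $t$), so for $\eps_\nu$ small this forces $(y,t)$ into a proper algebraic subvariety — a contradiction once we know such configurations exist in abundance.

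The mechanism for producing such configurations is the Furstenberg--Katznelson theorem. First I would reduce to a lattice: rescale so that $B$ contains a fixed cube, intersect $E_{\eps_\nu}$ with a fine grid $(\kappa\integers)^d\cap B$ for a small fixed $\kappa$, and note that for $\kappa$ small the grid points lying in $E_{\eps_\nu}$ still have relative density bounded below by, say, $\rho/2$ — here one uses that $E_{\eps_\nu}$ is a sublevel set of a fixed polynomial minus the $f_{j,\nu}$, so its intersection with the grid is controlled by the measure via a covering/Lebesgue-density argument applied after a harmless mollification of the $f_{j,\nu}$ (replace $f_{j,\nu}$ by their averages over grid cells in the range space of $\ell_j$, which only improves the witness relation since it is linear in $f_j$). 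Then apply the Furstenberg--Katznelson theorem to the positive-density subset $D_\nu=E_{\eps_\nu}\cap(\kappa\integers)^d\cap B$ of the box: for any prescribed finite pattern $F\subset\integers^d$ there is a threshold density below which $D_\nu$ must contain a homothetic copy $y+tF$ with $t\ge 1$ an integer. I would take $F$ to be (an integer rescaling of) the witness set $S$, together with enough auxiliary points to pin down the range-space behavior of the $\ell_j$, so that the copy $y+tF$ lies in $D_\nu$ and the witness relation annihilates each $f_{j,\nu}\circ\ell_j$ along it.

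The main obstacle is the interaction between the \emph{affine} scaling furnished by Furstenberg--Katznelson and the \emph{linear} structure of the maps $\ell_j$: a homothety $s\mapsto y+ts$ sends $\ell_j(s)$ to $\ell_j(y)+t\ell_j(s)$, an affine image of $\ell_j(S)$, so the relation $\sum_s c_s f_{j,\nu}(\ell_j(y+ts))=0$ is not automatic — it requires that $\ell_j$ restricted to $\{y+ts:s\in S\}$ still have the same coincidence pattern as $\ell_j|_S$, i.e. that $\ell_j(y+ts)=\ell_j(y+ts')$ exactly when $\ell_j(s)=\ell_j(s')$. For a homothety this coincidence pattern is preserved automatically, since $\ell_j(y+ts)-\ell_j(y+ts')=t(\ell_j(s)-\ell_j(s'))$ and $t\ne 0$; so in fact the witness relation \eqref{discrete2} does transfer, because it holds for \emph{all} functions $f_j$ and in particular for $f_{j,\nu}$ precomposed with the affine identification of $\ell_j(S)$ with $\ell_j(\{y+ts\})$. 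Thus the genuine difficulty is purely the density bookkeeping: ensuring that passing from the continuum set $E_{\eps_\nu}$ of measure $\ge\rho$ to a discrete subset of a box retains density bounded below independently of $\nu$, uniformly as $\eps_\nu\to 0$, so that a single Furstenberg--Katznelson threshold applies to all $\nu$. Once that is secured, combining the forced homothetic copy with the polynomial nonvanishing of $(y,t)\mapsto\sum_s c_s P(y+ts)$ — and a Fubini argument to locate a copy avoiding the bad subvariety, using that the subvariety has measure zero while the copies, reweighted, fill a set of positive measure — yields the contradiction and hence \eqref{sublevelsetsbound} with $\Theta$ obtained by the usual compactness-of-the-failure-set argument quantifying $\rho$ against $\eps$.
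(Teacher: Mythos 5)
Your skeleton matches the paper's: the witness relation $\sum_s c_s f_j(\ell_j(y+ts))=0$ does transfer to every homothetic copy $y+tS$ precisely because \eqref{discrete2} holds for \emph{all} functions (your observation that the coincidence pattern of $\ell_j$ on $y+tS$ matches that on $S$ is exactly the right point), so $h(y,t)=\sum_s c_s P(y+ts)$ is a nonvanishing polynomial that must be $O(\eps)$ on any copy contained in $E_\eps$, and Furstenberg--Katznelson supplies the copy. However, there is a genuine gap at the final step. The Furstenberg--Katznelson theorem produces \emph{one} configuration $y+tS\subset E_{\eps_\nu}$, and nothing prevents that particular $(y,t)$ from lying on (or very near) the zero variety of $h$, in which case $|h(y,t)|\le C\eps_\nu$ yields no contradiction. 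Your proposed repair --- that ``the copies, reweighted, fill a set of positive measure'' so a Fubini argument finds one avoiding the variety --- is not a consequence of the theorem as stated; you would need a counting version giving a positive-measure family of pairs $(y,t)$, and even then a quantitative comparison against the measure of the $\eps_\nu$-neighborhood of the variety. The paper avoids this entirely by running the argument in the contrapositive direction at a fixed scale: off an exceptional set of measure $O(\rho^\delta)$, one has a {\L}ojasiewicz-type lower bound $|h(x,r)|\ge r^A$ for $r\le\rho$, so any copy inside $E_\eps$ rooted in a good cube is forced to have $r\lesssim\eps^{1/A}$, and the Furstenberg--Katznelson theorem (in contrapositive form, applied cube by cube at scale $r_0\sim\eps^{1/A}/\rho$) then bounds $|E_\eps\cap Q_j|$ directly. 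This quantitative localization is the missing idea.

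A secondary gap is the discretization. Intersecting $E_{\eps_\nu}$ with a \emph{fixed} grid $(\kappa\integers)^d$ need not retain any density at all: a set of positive measure can miss a fixed countable grid entirely, and mollifying or cell-averaging the $f_{j,\nu}$ changes the set $E_\eps$ rather than its grid trace. The standard fix, which the paper uses, is to average over all translates $\omega$ of the lattice and apply Fubini, so that the discrete density bound holds for the translate realizing at least the average. Moreover your plan to take $F$ to be ``an integer rescaling of the witness set $S$'' presupposes $S\subset\rationals^d$, which the hypothesis does not provide; the paper handles a general finite $S$ by a lifting argument (a shear transformation into $\reals^d\times\reals^S$ converting homothetic copies of $S$ into copies of a standard integer configuration). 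Both points are repairable, but as written the discretization step does not go through.
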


In discussing the relation between these two notions
of nondegeneracy, we will employ the following auxiliary concept. 
A related, though distinct, concept was shown in \cite{trilinear} to be natural
in the context of a different question about multilinear operators.

\begin{definition}
Let $\ell_j:\reals^d\to\reals^{d_j}$ be finitely many linear transformations.
The collection $\{\ell_j\}$ is said to be {\em rationally commensurate}
if there exist invertible $\reals$-linear transformations $A:\reals^d\to\reals^d$
and $A_j:\reals^{d_j}\to\reals^{d_j}$
such that with respect to the standard bases of $\reals^d$ and of $\reals^{d_j}$,
the linear transformations $\tilde\ell_j = A_j^{-1}\circ\ell_j\circ A$
are all represented by matrices with integer entries.
\end{definition}

\begin{remark} \label{remark:ratcom}
It is easy to see that in the rationally commensurate
case, if $P$ fails to be nondegenerate with a finite witness, then there can be no sublevel
set bound of the form \eqref{sublevelsetsbound},\eqref{Thetaproperty}.
Indeed, we may change variables to arrange 
that each $\ell_j$ maps $\integers^d$ to $\integers^{d_j}$.
Let a bounded set $B\subset\reals^d$ and 
$\eps>0$ be given, and 
choose $r=r(\eps)>0$ so that $|P(x)-P(y)|<\eps$ whenever $|x-y|<r/2$.
Fix $\rho>0$ such that $|\ell_j(z)|\le \rho|z|$ for all $j$
and all $z\in\reals^d$.
Consider the lattice $r\integers^d=\{rn: n\in\integers^d\}$. For
each index $j$, $\ell_j(r\integers^d)\subset\reals^{d_j}$ is again a lattice. 
Let $\scriptl_r=r\integers^d\cap B$.

By assumption, there exist functions $f_j$ such that $P(y)=\sum_j f_j(\ell_j(y))$
for all $y\in\scriptl_r$. 
Since $\ell_j(\scriptl_r)\subset r\integers^{d_j}$,
there exists a constant $c_0>0$, independent of $r$, such that
for any $z\ne z'\in\scriptl_r$ and each index $j$,
either $|\ell_j(z)-\ell_j(z')|\ge c_0 r$,
or $\ell_j(z)=\ell_j(z')$. 

Only the values of $f_j$ on $\ell_j(\scriptl_r)$ come into play. 
Redefine $f_j$ so $f(x)\equiv f_j(\ell_j(y))$ for all $x$ in
the ball $B(y,cr)\subset\reals^{d_j}$ 
of radius $cr$ centered at each point $y\in \ell_j(\scriptl_r)$,
where $c$ is a positive constant, independent of $r$,
sufficiently small to ensure that these balls are pairwise disjoint
for distinct values of $y$.
 
The identity $P(y)=\sum_j f_j(\ell_j(y))$ still holds at each
point of $\scriptl_r$ for these modified functions $f_j$.
Moreover, if $x\in\reals^d$ and $|x-y|\le c'r$ for some $y\in\scriptl_r$,
where $c'$ is another sufficiently small positive constant independent of $r$,
then $|P(x)-\sum_j f_j\circ\ell_j(x)|
< \eps + |P(y)-\sum_j f_j\circ\ell_j(x)|$.
By construction, $f_j\circ\ell_j(x)=f_j\circ\ell_j(y)$.
Thus $|P(x)-\sum_j f_j\circ\ell_j(x)|<\eps$ whenever the distance
from $x$ to $\scriptl_r$ is $<c'r$. The measure of the set of all such
points $x\in B$ does not tend to zero as $\eps\to 0$,
contradicting \eqref{sublevelsetsbound}.
\qed
\end{remark}

Finitely witnessed nondegeneracy clearly implies nondegeneracy. Although we do not
know whether the converse holds in general, it is true 
in the rational case, which is one of the two main results of this paper:
\begin{theorem} \label{thm:discretenondegen}
Let $P:\reals^d\to\complex$ be a polynomial,
and let $\ell_j:\reals^d\to\reals^{d_j}$ be a finite collection of surjective linear 
transformations. If $\{\ell_j\}$
is rationally commensurate, and if $P$ is nondegenerate
relative to $\{\ell_j\}$,
then $P$ is nondegenerate with a finite witness relative to $\{\ell_j\}$.
\end{theorem}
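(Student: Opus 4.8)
The plan is to reduce, via the rational commensurability hypothesis, to a purely arithmetic statement about $\integers^d$, and then to produce a finite witness by a compactness/quantifier-elimination argument over the polynomial ring. First I would change variables using the invertible transformations $A$ and $A_j$ supplied by rational commensurateness: after replacing $P$ by $P\circ A$ and $\ell_j$ by $A_j^{-1}\circ\ell_j\circ A$, we may assume each $\ell_j$ is represented by an integer matrix, hence maps $\integers^d$ into $\integers^{d_j}$ (and, by clearing denominators, we may even assume $\ell_j(\integers^d)$ is a full-rank sublattice of $\integers^{d_j}$). Note that nondegeneracy and finitely-witnessed nondegeneracy are both invariant under such a change of variables — the relations \eqref{discrete1},\eqref{discrete2} transform covariantly — so it suffices to prove the theorem in this normalized setting.

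Next I would show that a finite witness can be sought entirely within the lattice $\integers^d$: the claim becomes that if no finite subset $S\subset\integers^d$ witnesses nondegeneracy, then $P$ is degenerate, i.e.\ $P=\sum_j p_j\circ\ell_j$ for some polynomials $p_j$. Fix a degree bound $N=\degree(P)$; by the cited equivalence in the nondegeneracy hypothesis it is enough to look for the $p_j$ among polynomials of degree $\le N$. Now the key observation is that the condition "$P(s)=\sum_j p_j(\ell_j(s))$ for all $s$ in a finite set $S\subset\integers^d$" is, for each fixed $S$, a finite system of linear equations in the (finitely many) coefficients of the $p_j$. If every finite $S\subset\integers^d$ admits a solution, then by a dimension/compactness argument — the solution sets form a nested family of nonempty affine subspaces of a fixed finite-dimensional space, hence have nonempty intersection — there is a single tuple $(p_j)$ satisfying $P(s)=\sum_j p_j(\ell_j(s))$ for \emph{all} $s\in\integers^d$. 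Since $\integers^d$ is Zariski-dense in $\reals^d$ and both sides are polynomials, this identity holds on all of $\reals^d$, so $P$ is degenerate.

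The remaining gap is to connect "no finite witness in $\reals^d$" with "no finite witness in $\integers^d$", i.e.\ to show that the failure of finitely-witnessed nondegeneracy over $\reals^d$ already forces failure over the lattice. This is where I expect the main obstacle to lie. The cleanest route is to argue the contrapositive at the level of the witness relations \eqref{discrete1}–\eqref{discrete2}: a finite witness $(S,\{c_s\})$ with $S\subset\reals^d$ is equivalent to asserting that the linear functional $v\mapsto\sum_s c_s v(s)$ annihilates every $f_j\circ\ell_j$ but not $P$. One then observes that "annihilates every $f_j\circ\ell_j$" is equivalent to a \emph{finite} list of linear constraints on the $c_s$, namely that $\sum_{s:\,\ell_j(s)=w}c_s=0$ for each $j$ and each value $w\in\ell_j(S)$ — since an arbitrary $f_j$ is constrained only through the fibers of $\ell_j$ over the finite set $\ell_j(S)$. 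Thus the existence of a finite witness is governed by a system with rational (indeed integer, after the normalization) coefficients determined by the incidence pattern of the points $s$ under the maps $\ell_j$; and such incidence patterns are already realized by points of $\integers^d$, because one can perturb any finite configuration $S\subset\reals^d$ to a nearby configuration in $\frac1M\integers^d$ (hence, after scaling, in $\integers^d$) preserving all coincidences $\ell_j(s)=\ell_j(s')$ and all non-coincidences — the coincidence conditions are themselves $\integers$-linear, so the configurations realizing a given pattern form a rational subspace, which meets $\integers^d$ if it is nonempty. This reduction, carried out carefully, is the technical heart of the argument; everything else is linear algebra and Zariski density.
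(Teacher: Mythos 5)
There is a genuine gap, and it sits exactly where the paper's main technical work lies. Your normalization to integer matrices, your annihilator description of the span of $\{(f_j\circ\ell_j)|_S\}$ (constancy on the fibers of $\ell_j|_S$), and your final compactness/Zariski-density step are all fine. The problem is the sentence ``by the cited equivalence in the nondegeneracy hypothesis it is enough to look for the $p_j$ among polynomials of degree $\le N$.'' That equivalence (from \cite{multiosc}) concerns \emph{global} decompositions $P=\sum_j f_j\circ\ell_j$ on all of $\reals^d$: if such a decomposition exists with measurable functions (or distributions), it exists with polynomials of bounded degree. It says nothing about decompositions restricted to a finite set $S$. The failure of finitely witnessed nondegeneracy only gives you, for each finite $S\subset\integers^d$, a representation $P|_S=\sum_j g_j\circ\ell_j|_S$ with \emph{arbitrary} functions $g_j$ — equivalently, functions constant on the fibers of $\ell_j|_S$ — and there is no a priori reason these should agree on $\ell_j(S)$ with polynomials of degree bounded independently of $S$ (any function on a finite set is interpolable by a polynomial, but of degree growing with $\#(S)$). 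Without that bound, your ``finite system of linear equations in the coefficients of the $p_j$'' need not be solvable for any given $S$, and the nested-affine-subspaces argument never gets off the ground. The paper itself flags this: the simple distributional proof of the polynomial-upgrade in \cite{multiosc} ``does not seem to adapt directly to the present discrete setting.''

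Closing this gap is precisely the content of the paper's Lemma~\ref{lemma:polynomialrep}, which occupies most of the proof: if $P(x)=\sum_\alpha f_\alpha\circ L_\alpha(x)$ holds for all $x\in\integers^d$ with $|x|\le N$ and arbitrary functions $f_\alpha$, then on a sublattice $M\integers^d\cap\{|x|\le N^*\}$ one can replace the $f_\alpha$ by polynomials of degree bounded by admissible data. Its proof is a delicate double induction (on the ranks of the maps $\ell_{\alpha,j}$ and on degrees of auxiliary polynomials) driven by difference operators $\prod D_{y_{\beta,k}}$ chosen in the integer lattice — this is where rational commensurability is actually used, and it is the step your proposal assumes rather than proves. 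A secondary, minor point: the reduction you describe in your last paragraph goes in the direction you do not need (a witness in $\integers^d$ is automatically a witness in $\reals^d$, so ``no real witness'' trivially implies ``no integer witness''); the perturbation-of-incidence-patterns discussion is not required for your argument's logical structure and can be dropped. Once bounded-degree polynomial decompositions on large lattice cubes are in hand, your stabilizing-affine-subspaces argument is a clean alternative to the paper's use of Lemma~\ref{lemma:nullsatz}, but that is the easy part of the proof.
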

\noindent
Most of the work in this paper is devoted to proving this purely algebraic fact.
Theorem~\ref{thm:discretenondegen} implies
Remark~\ref{remark:ratcom} in a stronger form,
for degeneracy of $P$ means that
there exist functions $f_j$ for which $P-\sum_j f_j\circ\ell_j \equiv 0$, and then
$E_\eps=B$ for all $\eps>0$.

Proposition~\ref{prop:sublevel1} and Theorem~\ref{thm:discretenondegen} together
yield our other main result.
\begin{theorem}  \label{thm:sublevel2}
Let a polynomial $P$ be nondegenerate with respect to a finite rationally commensurate 
collection of surjective linear transformations.
Then there exists a function $\Theta$ satisfying
\begin{equation} \label{eq:thetatozero}
\lim_{\eps\to 0^+}\Theta(\eps)=0
\end{equation}
such that for every $\eps>0$ and all measurable functions $f_j$,
\begin{equation} \label{eq:mainbound}
|E_\eps(P,f_1,\cdots,f_n)|\le \Theta(\eps).
\end{equation}
\end{theorem}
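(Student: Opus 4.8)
The plan is to derive Theorem~\ref{thm:sublevel2} by simply chaining together the two preceding results. Given a polynomial $P$ that is nondegenerate with respect to a finite rationally commensurate collection $\{\ell_j\}$ of surjective linear transformations, I would first invoke Theorem~\ref{thm:discretenondegen}: since $\{\ell_j\}$ is rationally commensurate and $P$ is nondegenerate relative to $\{\ell_j\}$, it follows that $P$ is in fact nondegenerate with a finite witness relative to $\{\ell_j\}$. This upgrades the hypothesis from the weaker algebraic notion to the stronger one that connects to additive combinatorics.

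With finitely witnessed nondegeneracy established, the second step is to apply Proposition~\ref{prop:sublevel1} directly. That proposition takes as input precisely a real-valued polynomial that is nondegenerate with a finite witness with respect to a finite collection of surjective linear transformations, and it outputs a function $\Theta$ with $\lim_{\eps\to 0^+}\Theta(\eps)=0$ such that $|E_\eps(P,f_1,\cdots,f_n)|\le\Theta(\eps)$ for all $\eps>0$ and all measurable $f_j$. This is exactly the conclusion \eqref{eq:thetatozero}--\eqref{eq:mainbound} claimed in Theorem~\ref{thm:sublevel2}, so the proof closes immediately.

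There is essentially no obstacle at this level: the content of Theorem~\ref{thm:sublevel2} is entirely absorbed into Theorem~\ref{thm:discretenondegen} (the algebraic heart of the paper, which bridges nondegeneracy and finitely witnessed nondegeneracy in the rational setting) and Proposition~\ref{prop:sublevel1} (the analytic/combinatorial step that feeds finitely witnessed nondegeneracy into the Furstenberg--Katznelson theorem to produce the sublevel set bound). The only minor point worth checking is that $P$ is real-valued here, as required by Proposition~\ref{prop:sublevel1}, which it is by hypothesis; Theorem~\ref{thm:discretenondegen} itself is stated for complex-valued $P$ and so applies a fortiori. Hence the entire proof is the two-line composition: Theorem~\ref{thm:discretenondegen} converts the hypothesis into finitely witnessed nondegeneracy, and Proposition~\ref{prop:sublevel1} then yields the desired $\Theta$.
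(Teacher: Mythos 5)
Your proposal is correct and is exactly the paper's own argument: the paper states that Proposition~\ref{prop:sublevel1} and Theorem~\ref{thm:discretenondegen} together yield Theorem~\ref{thm:sublevel2}, which is precisely the two-step composition you describe. Nothing further is needed.
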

It deserves emphasis that oscillatory integral bounds of the type \eqref{decay} 
which imply this conclusion 
were proved in \cite{multiosc} in several cases, without any hypothesis
of rational commensurability. 
Natural questions which remain are whether the commensurability hypothesis
is superfluous, and whether $\Theta$ may always be taken to be of
power law form $C\eps^\delta$. 
The proof here certainly does not give power law bounds, since
it relies on a result of Szemer\'edi type. 

The proof of Theorem~\ref{thm:sublevel2}
is sufficiently robust to yield also the following variant.
A corresponding extension of one of the results of \cite{multiosc}
was established by Greenblatt \cite{greenblattsmooth}.
\begin{theorem} \label{thm:smoothcase}
Let $P$ be a $C^\infty$ real-valued function defined in
a neighborhood of $x_0\in\reals^d$.
Let $\{\ell_j\}$ be a rationally commensurate 
finite collection of surjective linear transformations 
$\ell_j:\reals^d\to\reals^{d_j}$.
Suppose that some Taylor polynomial for $P$ at $x_0$ is
nondegenerate with respect to $\{\ell_j\}$.
Then there exist a neighborhood $U$ of $x_0$
and a function $\Theta$ satisfying
$\lim_{\eps\to 0^+}\Theta(\eps)=0$
such that for every $\eps>0$ and all measurable functions $f_j$,
\begin{equation}
\big|\big\{x\in U: |(P-\sum_j f_j\circ\ell_j)(x)|<\eps\big\}\big|
\le \Theta(\eps).
\end{equation}
\end{theorem}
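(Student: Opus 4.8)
The plan is to run the proof of Theorem~\ref{thm:sublevel2} with $P$ replaced by its $C^\infty$ germ at $x_0$, localized to a small ball $U\ni x_0$, arranging that on the small scales at which the Furstenberg--Katznelson machinery of the proof of Proposition~\ref{prop:sublevel1} produces configurations, $P$ behaves like a fixed nondegenerate polynomial. Write $\tilde P_k$ for the degree-$k$ homogeneous part of the Taylor expansion of $P$ at $x_0$, regarded as a polynomial in $z=x-x_0$. Since nondegeneracy is unaffected by translating the argument, the hypothesis says that $\tilde Q:=\sum_{k\le N}\tilde P_k$ is, for some $N$, a nondegenerate polynomial on $\reals^d$ relative to $\{\ell_j\}$.

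By Theorem~\ref{thm:discretenondegen} — this is where rational commensurability is used — $\tilde Q$ has a finite witness $(S_0,\{c^0_s\})$: $\sum_s c^0_s\tilde Q(s)\ne0$ while $\sum_s c^0_s\, g(\ell_j(s))=0$ for every $j$ and every function $g$ (in particular $\sum_s c^0_s=0$). I would then sharpen this witness by dilation averaging. Let $m$ be the least index with $\mu:=\sum_s c^0_s\tilde P_m(s)\ne0$; such an $m$ exists with $1\le m\le N$ because $\sum_{k\le N}\sum_s c^0_s\tilde P_k(s)=\sum_s c^0_s\tilde Q(s)\ne0$ and $\sum_s c^0_s\tilde P_0(s)=0$. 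Choose distinct reals $\lambda_0,\dots,\lambda_m$ (generic, so the dilates below are pairwise disjoint), let $(\beta_i)$ solve the Vandermonde system $\sum_i\beta_i\lambda_i^p=\delta_{p,m}$ for $0\le p\le m$, and fix a small $\rho>0$. The set $S:=\bigsqcup_i(\rho\lambda_i)S_0$, carrying weight $\rho^{-m}\beta_i c^0_s$ at $(\rho\lambda_i)s$, is again a finite witness (each dilate $(\rho\lambda_i)S_0$ satisfies the witness identity), and in addition $\sum_{s\in S}c_s\Phi(s)=0$ for every polynomial $\Phi$ of degree $<m$, while $\sum_{s\in S}c_s\tilde P_m(s)=\mu\ne0$.

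The role of this vanishing-moment property is the lower bound
\[
\Bigl|\sum_{s\in S}c_s\,P(a+ts)\Bigr|\ \ge\ \tfrac14|\mu|\,|t|^{m}
\qquad\text{for }a\in B(x_0,r_0),\ 0<|t|\le t_0,
\]
with $r_0,t_0$ small. To get it, expand $P$ about the basepoint $a$ to a fixed order $M>m$: the terms of degree $<m$ in $t$ vanish by the moment property; the coefficient of $t^m$ is $\sum_s c_s$ applied to the degree-$m$ Taylor part of $P$ at $a$, which depends continuously on $a$ and equals $\mu$ at $a=x_0$, hence has modulus $\ge|\mu|/2$ throughout a small ball; and the remaining terms, together with the $C^{M+1}$ Taylor remainder, are $O(|t|^{m+1})$ uniformly in $a$. (For a polynomial this is the exact identity $\sum_s c_s P(a+ts)=t^m\mu$ used in the proof of Proposition~\ref{prop:sublevel1}; here it is replaced by its leading asymptotics.) Now conclude as there: fix $\eps>0$ and measurable $f_j$, set $U=B(x_0,r_1)$ with $r_1$ small enough that $a+tS\subset B(x_0,r_0)$ whenever $a\in U$ and $|t|\le t_0$, and put $E=\{x\in U:|P(x)-\sum_j f_j(\ell_j x)|<\eps\}$. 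If $|E|\ge\theta$, the measure-theoretic form of the Furstenberg--Katznelson theorem used in the proof of Proposition~\ref{prop:sublevel1}, localized to cubes of side $t_0$, produces a genuine homothetic copy $\{a+tv:v\in S\}\subset E$ with $0<t\le t_0$ and $t\ge\tau(\theta)>0$. Summing $|P(a+ts)-\sum_j f_j(\ell_j(a+ts))|<\eps$ against $c_s$ and using the witness identity applied to the functions $v\mapsto f_j(\ell_j a+tv)$ to cancel the $f_j$-terms gives $|\sum_s c_s P(a+ts)|<\eps\sum_s|c_s|$, whence $\tfrac14|\mu|\,t^m<\eps\sum_s|c_s|$ and $\eps>\tfrac{|\mu|}{4\sum_s|c_s|}\,\tau(\theta)^m$. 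Contrapositively, $|E_\eps(P,f_1,\dots,f_n)\cap U|<\theta$ whenever $\eps\le\tfrac{|\mu|}{4\sum_s|c_s|}\tau(\theta)^m$; taking $\Theta(\eps):=\inf\{\theta>0:\eps\le\tfrac{|\mu|}{4\sum_s|c_s|}\tau(\theta)^m\}$ gives the assertion, with $\Theta(\eps)\to0$ as $\eps\to0^+$ because $\tau(\theta)>0$ for every $\theta>0$.

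The main obstacle is the displayed lower bound, and with it the necessity of the moment-killing step. Without vanishing moments, $\sum_s c_s P(a+ts)$ would generically be dominated, for the small $t$ that Furstenberg--Katznelson forces, by contributions of size $|t|^k$ with $k<m$ from low-order Taylor coefficients of $P$ at the uncontrolled basepoint $a$; since those coefficients need not vanish away from $x_0$ and may be arbitrarily small there, no uniform lower bound $\gtrsim|t|^m$ — the mechanism turning a dense sublevel set into a contradiction — would survive. Once this estimate is in place, the remaining ingredients are either a transcription of the polynomial argument or already contained in the cited results.
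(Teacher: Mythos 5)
Your argument is correct, and it reaches the conclusion by a genuinely different mechanism than the paper. The paper's proof recycles the machinery of Proposition~\ref{prop:sublevel1} wholesale: it partitions a neighborhood into cubes of an $\eps$-dependent sidelength $\rho$, replaces $P$ on each cube $Q_k$ by its Taylor polynomial at the cube's center, uses the identity $\sum_s c_s(\tilde P_k-\sum_j f_j\circ\ell_j)(x+rs)=h(L_k(x),\rho r)$ to reduce to the fixed polynomial $h$, and discards an exceptional family of cubes of small total measure on which $|h|$ fails the lower bound $|h(x,r)|\ge r^A$. You instead preprocess the witness itself: by averaging dilates of $S_0$ against Vandermonde coefficients you kill all moments of degree $<m$ while preserving both the annihilation of every $g\circ\ell_j$ and the pairing $\mu$ with the degree-$m$ Taylor part, so that $\sum_s c_sP(a+ts)=c_m(a)t^m+O(|t|^{m+1})$ with $c_m$ continuous and $c_m(x_0)=\mu\ne0$. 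This yields a lower bound $\gtrsim|t|^m$ \emph{uniformly} on a fixed small ball, with no exceptional set and no $\eps$-dependent decomposition, after which a single (localized) application of the Furstenberg--Katznelson step finishes the proof. What each approach buys: yours is cleaner and replaces the ``elementary argument left to the reader'' (the good-cube/bad-cube dichotomy for the zero set of $h$) by an explicit algebraic construction; the price is that it is intrinsically local --- it exploits that the theorem only asks for \emph{some} neighborhood $U$ of $x_0$, since the nonvanishing of $c_m(a)$ is only guaranteed near $x_0$ --- whereas the paper's exceptional-set argument is the one that also runs on a fixed global domain, as in Proposition~\ref{prop:sublevel1}. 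Two small points you should make explicit in a final write-up: adjoin $0$ to $S$ with coefficient $0$ so that Proposition~\ref{prop:continuumfurstkatz} applies as stated, and record that localizing to cubes of side comparable to $t_0$ forces the dilation parameter $t$ produced by Furstenberg--Katznelson to satisfy $t\le t_0$ (since $S$ contains two points at definite distance), so that the Taylor estimate is applicable to the configuration obtained.
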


Another extension concerns periodic sublevel sets, in which 
$P-\sum_j f_j\circ\ell_j$ is viewed as taking values in $\reals/2\pi\integers$,
rather than in $\reals$. 
For simplicity, let $P$ be a polynomial. 
Define
\[
\norm{y} = \distance(y,2\pi\integers)
\]
for $y\in\reals$.
\begin{equation} 
E^\dagger_{\eps,\lambda}(P,f_1,\cdots,f_n) =
\{x\in B: \norm{\lambda P(x)-\sum_j f_j(\ell_j(x))}<\eps\}.
\end{equation}
Let us assume that the cutoff function $\eta$ appearing in 
\eqref{oscintegraldefn} is nonnegative,
and write $|E|=\int_E \eta$ for any measurable set $E\subset\reals^d$.
A uniform bound for multilinear oscillatory integrals of the form
$|\scripti_\lambda(f_1,\cdots,f_n) \le  C|\lambda|^{-\delta}$
for some $\delta\in(0,1)$
implies uniform bounds of the form 
\begin{equation}
\label{powersublevelmodtwopi}
\big| |E^\dagger_{\eps,\lambda}|-c_0\eps \big|
\le C\eps^\delta|\lambda|^{-\delta},
\end{equation}
where $c_0 = \int_{\reals^d}\eta(x)\,dx$.
Similarly a uniform decay bound 
$|\scripti_\lambda(f_1,\cdots,f_n)| \le  \Theta(\lambda)$,
where $\Theta(\lambda)\to 0$ as $|\lambda|\to\infty$,
implies uniform bounds 
\begin{equation}
\label{sublevelmodtwopi}
\big| |E^\dagger_{\eps,\lambda}|-c_0\eps \big|
\le \theta(\eps,|\lambda|^{-1})
\end{equation}
where $\theta(s,t)\to 0$ as $\min(s,t)\to 0^+$.
Conversely, such inequalities
imply uniform decay bounds for oscillatory integrals.
Thus it is natural to seek suitable uniform upper bounds for 
$|E^\dagger_{\eps,\lambda}|$ for nondegenerate polynomial phases $P$.

\begin{theorem} \label{thm:periodicsublevelsets}
Suppose that a polynomial $P$ is nondegenerate, relative to a rationally
commensurate set $\{\ell_j\}$ of surjective linear mappings.
Let $B\subset\reals^d$ be a bounded set.
Then there exists a positive function $\Theta$
satisfying $\Theta(t)\to 0$ as $t\to 0^+$
such that for all measurable functions $f_j$ and
all $|\lambda|\ge 1$,
\begin{equation}
\big|\{x\in B: \distance\big(\lambda P(x)-\sum_j f_j(\ell_j(x)),2\pi\integers\big)<\eps\}\big|
\le \Theta(\eps).
\end{equation}
\end{theorem}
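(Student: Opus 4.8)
The plan is to reduce Theorem~\ref{thm:periodicsublevelsets} to Theorem~\ref{thm:sublevel2} by exploiting the rational commensurability and the hypothesis $|\lambda|\ge 1$. First I would dispose of a bounded range: for $1\le|\lambda|\le\Lambda_0$ with $\Lambda_0$ fixed, the periodic sublevel set is controlled by the ordinary sublevel set bound applied to the polynomial $\lambda P$, since $\distance(t,2\pi\integers)<\eps$ implies $|t-2\pi m|<\eps$ for the nearest multiple, and locally on a bounded set $B$ only finitely many branches $m$ occur; absorbing $2\pi m$ into the functions $f_j$ (it is a constant, hence trivially of the form $f_j\circ\ell_j$ for any single $j$) shows that this piece of $E^\dagger_{\eps,\lambda}$ is contained in a bounded union of ordinary sublevel sets $E_\eps(\lambda P,\dots)$, each of measure $\le\Theta_0(\eps)$ by Theorem~\ref{thm:sublevel2}, because $\lambda P$ is nondegenerate whenever $P$ is and $\lambda\ne 0$.

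The substantive point is uniformity as $|\lambda|\to\infty$. Here I would use rational commensurability to change variables so that every $\ell_j$ carries $\integers^d$ into $\integers^{d_j}$, and then decompose $B$ (up to a set of measure $\to 0$) into translates of a small cube and, within each, write $x = a + z$ with $a$ ranging over a fixed lattice of scale $r$ and $z$ in the cube of side $r$. The idea is that on the lattice directions the linear functions $\ell_j$ take integer-scaled values, so the ambiguity coming from $2\pi\integers$ is ``aligned'' with the multilinear structure and can again be folded into the $f_j$; what remains is a genuinely nonperiodic sublevel problem for $\lambda P$ at scale $r$, to which a rescaled version of Theorem~\ref{thm:sublevel2} applies. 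One then optimizes the cube scale $r=r(\eps)$ against $\eps$ and against the discretization error, exactly as in the deduction of \eqref{ineq:sublevel} from \eqref{decay} sketched in \S\ref{section:bilinearcase}, obtaining a bound $\Theta(\eps)$ independent of $\lambda$.

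Alternatively, and perhaps more cleanly, I would prove a finite-witness statement directly for the periodic problem: adapt Theorem~\ref{thm:discretenondegen} to show that nondegeneracy of $P$ (in the rationally commensurate case) yields a finite set $S\subset\integers^d$ and integer coefficients $c_s$ with $\sum_s c_s P(s)\notin\frac{2\pi}{|\lambda|}\integers$-type obstruction failing uniformly — more precisely, $\sum_s c_s P(s)\ne 0$ while $\sum_s c_s f_j(\ell_j(s))\equiv 0$, with $\ell_j(S)\subset\integers^{d_j}$ so that the witness survives scaling by any $r$. Then, running the Furstenberg–Katznelson argument behind Proposition~\ref{prop:sublevel1} on the torus $(\reals/2\pi\integers)$-valued quantity $\lambda P - \sum_j f_j\circ\ell_j$ instead of the $\reals$-valued one: a density version of Szemer\'edi's theorem locates, inside any set of $x$ of positive measure, a homothetic copy $a + rS$ of the witness configuration with $r$ bounded below, and the integer relation $\sum_s c_s \cdot(\text{value at }a+rs) = \lambda\sum_s c_s P(a+rs)$ modulo $2\pi\integers$ is forced to be small, contradicting the uniform lower bound on $|\lambda \sum_s c_s P(a+rs)|$ coming from $|\lambda|\ge 1$ and the polynomial lower bound $|\sum_s c_s P(a+rs)|\gtrsim r^{\deg P}$ for $a$ in a suitable subset.

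The main obstacle is the uniformity in $\lambda$: naively, $\distance(\lambda P(x)-\sum f_j\circ\ell_j(x),2\pi\integers)<\eps$ is a far weaker constraint than $|\lambda P(x)-\sum f_j\circ\ell_j(x)|<\eps$, since for large $\lambda$ the function $\lambda P$ wraps around the circle many times over $B$, so one cannot simply quote Theorem~\ref{thm:sublevel2}. The resolution must be that the finite witness $\sum_s c_s P(s)\ne 0$ is a fixed nonzero number, and after homothety the relevant quantity is $\lambda\sum_s c_s P(a+rs)$, which is bounded below by $|\lambda|\cdot r^{\deg P}\gtrsim r^{\deg P}$ — independent of $\lambda$ for $|\lambda|\ge 1$ — so it cannot lie within $O(\eps/\sum_s|c_s|)$ of $2\pi\integers$ once $r$ is chosen appropriately in terms of $\eps$. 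Making the scale $r$ simultaneously (i) bounded below so Szemer\'edi/Furstenberg–Katznelson produces the configuration in a positive-measure set, and (ii) large enough that $r^{\deg P}$ beats $\eps$, is the delicate balance, and handling the lower order terms of $P$ under homothety (so that $\sum_s c_s P(a+rs)$ genuinely has size $\sim r^{\deg P}$, not smaller, for $a$ in a large enough set) is the one computation that needs care.
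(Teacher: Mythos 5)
Your third approach is structurally the right one and matches the paper's outline: change variables so the $\ell_j$ have integer matrices, produce a finite witness $S\subset\integers^d$ with \emph{integer} coefficients $c_s$ (the paper gets this by noting that the solution space of \eqref{discrete2} is the nullspace of an integer matrix, hence spanned by integer vectors), and then observe that membership of $x+rS$ in $E^\dagger_{\eps,\lambda}$ forces $\distance(\lambda h(x,r),2\pi\integers)\le C\eps$ where $h(x,r)=\sum_s c_s P(x+rs)$, so that Furstenberg--Katznelson can be applied to the complement of the set of such configurations. (Your first two paragraphs, by contrast, do not work for large $\lambda$: the integer $m(x)$ with $|\lambda P(x)-\sum_j f_j(\ell_j(x))-2\pi m(x)|<\eps$ depends measurably on $x$ and is not constant on level sets of any single $\ell_j$, so it cannot be absorbed into the $f_j$; you concede as much.)

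The genuine gap is in your claimed resolution of the uniformity in $\lambda$. You argue that $|\lambda\sum_s c_s P(a+rs)|\gtrsim |\lambda|\,r^{\deg P}\ge r^{\deg P}$ and conclude that this quantity ``cannot lie within $O(\eps)$ of $2\pi\integers$'' once $r$ is chosen suitably. A lower bound on the absolute value of $\lambda h(a,r)$ gives no control whatsoever on its distance to $2\pi\integers$: for large $\lambda$ the number $\lambda h(a,r)$ can be enormous and yet exactly equal to a multiple of $2\pi$. What is actually needed, and what the paper supplies, is an \emph{equidistribution} statement: since $r\mapsto h(x,r)$ is a polynomial of positive degree whose leading coefficient is a nonzero constant independent of $x$, a van der Corput argument (Lemma~\ref{lemma:corollaryofVdC} and the scalar bound \eqref{eq:periodicscalarsublevel}) shows that for each $|\lambda|\ge 1$ the set of $(x,r)$ with $\norm{\lambda h(x,r)}\le\delta r^A$ has measure $O(\delta^c)$. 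One then pigeonholes over the dilation parameter --- taking a union bound over $j\in\{1,\dots,N\}$ at the cost of a factor of $N$ --- to find a single $t\in[\tfrac12,1]$ such that $\norm{\lambda h(x,tj/N)}\ge cN^{-A-A'}$ for all $j$ and all $x$ outside an exceptional set of measure $O(N^{-1})$; only then does the Furstenberg--Katznelson step close. Without this oscillation/measure estimate your argument proves nothing for $|\lambda|$ large, which is precisely the regime the theorem is about.
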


In short,
$|E^\dagger_{\eps,\lambda}|\le \Theta(\eps)$.
This is of course weaker than the bound \eqref{sublevelmodtwopi},
so falls short of establishing the conjectured bound for multilinear oscillatory integrals.

\section{Proof of Proposition~\ref{prop:sublevel1}}

\begin{proposition} \label{prop:continuumfurstkatz}
Let $B\subset\reals^d$ be a bounded region,
and let $S\subset\reals^d$ be  a finite set
which contains $0$.
There exists a positive function $\Theta$ satisfying $\Theta(r)
\to 0$ as $r\to 0^+$, depending only on $S$ and on $B$,
with the following property: 
For any Lebesgue measurable set $E\subset B$ 
and any $r>0$, either 
(i) there exist $x\in B$ and $t\ge r$ such that $x+tS\subset E$, 
or (ii) $|E|\le\Theta(r)$.
\end{proposition}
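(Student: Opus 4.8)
The plan is to deduce the proposition from the Furstenberg--Katznelson theorem in a quantitative (``Varnavides'') form, after which a suitable $\Theta$ essentially writes itself.

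First I would fix a cube $Q_0\subset\reals^d$ of side length $L=L(B)$ containing $B$, and set up the version of the Furstenberg--Katznelson theorem I want to use: for each finite $S\subset\reals^d$ and each $\delta>0$ there is a constant $\kappa(S,\delta)>0$ such that every measurable $A\subset Q_0$ with $|A|\ge\delta|Q_0|$ has the property that the $(d{+}1)$-dimensional Lebesgue measure of $\{(x,t)\in\reals^d\times(0,\infty):x+tS\subset A\}$ is at least $\kappa(S,\delta)$. Here $\kappa(S,\cdot)$ is nondecreasing (enlarging $\delta$ only shrinks the class of admissible $A$) and $\kappa(S,\delta)\to0$ as $\delta\to0^+$ (a set of volume $\delta|Q_0|$ lying in a cube of the corresponding size carries only $O(\delta)$ worth of homothetic copies). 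The qualitative shadow of this statement --- existence of at least one homothetic copy of $S$ inside $A$ --- is the continuum multidimensional Szemer\'edi theorem; the passage to a uniform positive lower bound on the measure of the family of homothetic copies is the standard Varnavides averaging argument, and in any case the statement is a routine continuum variant of the ergodic multiple-recurrence theorem of Furstenberg and Katznelson, which already asserts a positive $\liminf$ of averages over the dilation parameter.

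Granting this input, the rest is immediate. Given measurable $E\subset B$, put $\delta=|E|/|Q_0|$. Since $0\in S$, every $(x,t)$ with $x+tS\subset E$ has $x\in E$, so the homothetic copies with dilation $t<r$ together occupy at most $|E|\cdot r\le|Q_0|\,r$ in $(d{+}1)$-dimensional measure; comparing with the lower bound $\kappa(S,\delta)$, I conclude that if $r<\kappa(S,\delta)/|Q_0|$ then some $(x,t)$ has $x+tS\subset E$ with $t\ge r$, and then also $x\in E\subset B$, so alternative (i) holds. I would then define
\[
\Theta(r)\ :=\ |Q_0|\cdot\sup\{\,\delta'>0:\ \kappa(S,\delta')\le r\,|Q_0|\,\},
\]
which is a positive number in $(0,|Q_0|]$ because $\kappa(S,\cdot)$ is nondecreasing with $\kappa(S,0^+)=0$. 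If $|E|>\Theta(r)$ then $\delta$ exceeds that supremum, hence $\kappa(S,\delta)>r|Q_0|$ and (i) holds; otherwise $|E|\le\Theta(r)$, which is (ii). Finally $\Theta$ depends only on $S$ and $B$, and $\Theta(r)\to0$ as $r\to0^+$, since for any $\varepsilon>0$ one has $\kappa(S,\varepsilon)>0$, so once $r<\kappa(S,\varepsilon)/|Q_0|$ monotonicity gives $\sup\{\delta':\kappa(S,\delta')\le r|Q_0|\}\le\varepsilon$ and hence $\Theta(r)\le\varepsilon|Q_0|$.

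The one genuinely substantial ingredient is thus the quantitative input above: a lower bound on the dilation parameter uniform over all measurable sets of a given density, valid for an \emph{arbitrary} (in particular, irrational) finite configuration $S$. If one only wants rational $S\subset\rationals^d$, which is all that is needed for Proposition~\ref{prop:sublevel1} since the finite-witness property is open and the rationals are dense, one may bypass the continuum form of the theorem: clear denominators, discretize $E$ at a small scale $h$ via $A_c=\{p\in h\integers^d:p+c\in E\}$, use that the $c$-average of $|A_c|$ equals $|E|/h^d$ to choose an offset $c$ making $A_c$ fill at least a $\delta$-fraction of the relevant grid, and apply the discrete Furstenberg--Katznelson theorem --- to the configuration dilated by an integer factor large enough that the copy it returns has dilation at least $r$ --- to place a homothetic copy of $S$ directly inside $E$. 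The resulting $\Theta$ comes out exactly as above, with $\kappa(S,\delta)/|Q_0|$ replaced by a constant times the reciprocal of the discrete multidimensional Szemer\'edi threshold at density $\delta$.
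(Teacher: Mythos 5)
Your derivation of $\Theta$ from a quantitative (Varnavides-type) density statement is clean, and your discrete fallback (clear denominators, discretize at scale $h\sim r$, average over offsets, apply the discrete Furstenberg--Katznelson theorem) is essentially the paper's argument for the case $S\subset\integers^d$. But there is a genuine gap at exactly the point you flag as ``the one genuinely substantial ingredient'': the proposition is stated for an \emph{arbitrary} finite $S\subset\reals^d$, and for irrational $S$ the asserted continuum input --- a uniform lower bound $\kappa(S,\delta)>0$ on the measure of the set of pairs $(x,t)$ with $x+tS\subset A$ --- does not follow ``routinely'' from the cited theorem of Furstenberg and Katznelson, which concerns configurations in $\integers^d$ (equivalently, $\integers^k$-actions). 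A configuration such as $S=\{0,1,\sqrt2\}\subset\reals$ cannot be mapped to an integer configuration by any linear change of variables, so neither of your two routes reaches it. The paper supplies the missing device: a shear $T(x,t)=(x-\sum_{\sigma\in S}t_\sigma\sigma,\,t)$ on $\reals^d\times\reals^S$ lifts $E$ to a set $E^\ddagger$ in which homothetic copies of $S$ correspond to homothetic copies of the \emph{integral} configuration $S^\ddagger=\{(0,e_s):s\in S\}$ inside the fibers $\{x\}\times\reals^S$, after which the integer case applies fiberwise and Fubini finishes. Without this (or some equivalent transference to an integral configuration, e.g.\ via the IP-Szemer\'edi theorem), your proof only covers $S\subset\rationals^d$.

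Your proposed repair --- that rational $S$ suffices for Proposition~\ref{prop:sublevel1} because ``the finite-witness property is open and the rationals are dense'' --- does not work. The relations \eqref{discrete2} say that for each $j$ the coefficients $c_s$ sum to zero over every fiber of $\ell_j|_S$; this is a statement about which points of $S$ collide under $\ell_j$, and it is destroyed by generic perturbation of $S$ (a generic perturbation makes $\ell_j|_S$ injective, whence every function on $S$ is of the form $f_j\circ\ell_j$ and no witness relation survives). Perturbing $S$ while preserving the fiber structure means moving within an affine subspace cut out by the equations $\ell_j(s-\tilde s)=0$, and that subspace contains nearby rational points only when the kernels of the $\ell_j$ are rational --- i.e.\ precisely in the rationally commensurate case. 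Since Proposition~\ref{prop:sublevel1} is stated without any commensurability hypothesis, the witness set must be allowed to be irrational, and the lifting argument (or an equivalent) is unavoidable.
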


\begin{proof}
Denote by $\#(A)$ the cardinality of a set $A$.
According to a theorem of Furstenberg and Katznelson \cite{furstkatz},
for any finite set $S\subset\integers^d$ 
there exists a positive function $\theta$, satisfying $\theta(N)\to 0$
as $N\to\infty$, such that for any set $A\subset \{1,2,\cdots,N\}^d$,
either there exist $0\ne n\in\integers$ and $x\in \integers^d$ such that
$x+nS\subset A$, or $\#(A) \le \theta(N)N^d$.

Proposition~\ref{prop:continuumfurstkatz} follows rather directly from this result. 
Under the additional assumption that the set $S$ in the hypothesis 
is contained in $\integers^d$, the reduction goes as follows:
Let $N$ be a large positive integer chosen so that $\tfrac12 r\le N\rp<r$.
Define $\Omega = \{\omega=(\omega_1,\cdots,\omega_d)\in\reals^d: 0\le\omega_j<N\rp
\text{ for all } 1\le j\le d\}$,
and define $\scriptl_{N,\omega} = N\rp\integers^d+\omega 
= \{N\rp n+\omega: n\in\integers^d\}$.
Let $E\subset B$, and 
suppose that conclusion (i) of Proposition~\ref{prop:continuumfurstkatz} 
fails to hold.  Decompose $E = \cup_{\omega\in\Omega} E_\omega$ 
where $E_\omega = E\cap \scriptl_{N,\omega}$. 
Then for any nonzero integer $j$ and point $x\in\scriptl_{N,\omega}$,
the set $x+jN^{-1}S$ is not contained in $E_\omega$.
Applying the theorem of Furstenberg and Katznelson
to $\tilde E_\omega = \{Ny: y\in E_\omega\}$ yields the bound
$\#(E_\omega) \le N^d\theta(N)$. Consequently
\begin{equation}
|E| = \int_{\Omega} \#(E_\omega)\,d\omega\le N^{-d} \sup_{\omega} \#(E_\omega) \le \theta(N),
\end{equation}
establishing Proposition~\ref{prop:continuumfurstkatz} under the auxiliary hypothesis.

The general case of Proposition~\ref{prop:continuumfurstkatz} follows 
from a particular case of the theorem of Furstenberg and Katznelson
by the following lifting argument.
Introduce $\reals^M=\reals^d\times\reals^S$ with coordinates $(x,t)$, 
where $t=(t_s: s\in S)\in\reals^S$.
Let $e_s\in\reals^{S}$ be the unit vector corresponding to the $s$-th coordinate.
Define $E^\dagger=E\times\reals^{S}$.

Introduce the shear transformation $T:\reals^M\to\reals^M$ defined by 
\begin{equation}
T(x,t) = (x-\sum_{\sigma\in S} t_\sigma \sigma,t), 
\end{equation}
and let $E^\ddagger = T(E^\dagger)$.
Then for any $r>0$, $t\in\reals^S$, $s\in S$, and $x\in\reals^d$,
\begin{equation} \label{liftingidentity}
x+ rs \in E
\text{ if and only if } 
T(x,t)+(0, r e_s)\in E^\ddagger.
\end{equation}
Indeed, $x+r s\in E$ is equivalent to $(x+ r s,t+r e_{s}) \in E^\dagger$.
Next
\begin{multline*}
T(x+ r s,
t+ re_{s}) 
= (x+ rs
- \sum_{\sigma\in S} t_\sigma\sigma - rs,
t+ re_{s}) 
\\
= (x-\sum_{\sigma} t_\sigma \sigma,
t+ re_{s} )
= T(x,t)+(0, re_{s}), 
\end{multline*}
whence
\begin{multline*}
x+ rs \in E
\Leftrightarrow
(x+ rs,
t+ re_{s})\in E^\dagger
\\
\Leftrightarrow T(x+ rs,
t+ re_{s} )\in E^\ddagger
\Leftrightarrow T(x,t)+(0, re_{s})\in E^\ddagger.
\end{multline*}

Let $S^\ddagger = \{(0,e_s): s\in S\}\subset\reals^M$.
Suppose now that $E$ satisfies the restriction
that $x+rS\subset E$ implies
$r\le\eps$. Then $E^\ddagger$ satisfies
a corresponding restriction: if $z\in \reals^M$ and
if $z+rS^\ddagger \subset E^\ddagger$,
then $r\le\eps$.
Indeed, there exists a unique point $(x,t)$ satisfying $T(x,t)=z$.
By \eqref{liftingidentity}, $z+rS^\ddagger\subset E^\ddagger$ 
if and only if $x+rS\subset E$.

For almost every $x\in\reals^d$,
we now have a set $E^* = \{(w,t)\in E^\ddagger: w=x \text{ and } |t|\le 1\}$,
contained in a fixed bounded subset of $\reals^{S}$, 
such that for any $z\in \reals^{S}$,
if $z+re_{s} \in E^*$ for every $s\in S$ then $r\le\eps$.
As was shown above,
this forces $|E^*\cap B'|\le \Theta(\eps)$ for any fixed bounded set $B'$. 
Therefore $|E|\le C\Theta(\eps)$ by Fubini's theorem.
\end{proof}

\begin{proof}[Proof of Proposition~\ref{prop:sublevel1}]

Suppose that $P$ is nondegenerate with a finite witness.
Fix a finite set $S$ and scalars $\{c_s: s\in S\}$
such that $\sum_{s\in S} c_s F(s)=0$ whenever
$F$ takes the form $\sum_j f_j\circ\ell_j$, but $\sum_{s\in S} c_s P(s)=1$.
In Proposition~?? it was convenient to assume that $0\in S$; here
if $0$ is not already an element of $S$, we may adjoin it, setting  $c_0=0$.

Let $f_j$ be arbitrary measurable functions;
for convenience we assume that $f_j$ is defined on all of $\reals^1$.
Set 
\begin{gather*}
h(y,r) = \sum_{s\in S} c_s 
\big(P(y+rs)-\sum_j (f_j\circ\ell_j)(y+rs)\big) 
\\
E_\eps=\{y\in B: |\big(P-\sum_j f_j\circ\ell_j \big)(y)|<\eps\}.
\end{gather*}
Then
\[
h(y,r)
=\sum_{s\in S} c_s 
\big(P(y+rs)-\sum_j (f_j\circ\ell_j)(y+rs)\big) 
\equiv \sum_{s\in S} c_s P(y+rs)
\]
is a polynomial function of $(y,r)\in\reals^d\times\reals$.
The set $S$ and coefficients $c_s$ were constructed in part to ensure that
this polynomial does not vanish identically.
Hence, by an elementary argument which is left to the reader,
there exist $A<\infty$, $\delta>0$, and $C<\infty$ such 
for any sufficiently small $\rho>0$, $B\subset\reals^d$
may be partitioned into the union of $O(\rho^{-d})$ 
dyadic cubes $Q_j$ of sidelength $\rho$, together
with a remainder set $B\setminus \cup_j Q_j$, in such a way that
(i) $|B\setminus \cup_j Q_j|\le C\rho^\delta$
and (ii) for each $j$, each $x\in Q_j$, and each $r\in(0,\rho]$,
$|h(x,r)|\ge  r^A$.

Choose $\rho=\eps^{1/2A}$.
If $x\in B$ and $x+rS\subset E_\eps$ then 
\begin{equation}
|h(x,r)|\le \sum_{s\in S}c_s 
\big|\big(P-\sum_j f_j\circ\ell_j\big)
(x+rs)\big| \le C\sum_{s\in S} \eps,
\end{equation}
which implies that
$r^A\lesssim \#(S)\eps$ 
if $x\in \cup_j Q_j$ and $r\le\rho$,
where $\#(S)$ denotes the cardinality of $S$.
Therefore
\begin{equation}
|E_\eps \cap Q_j| \le |Q_j| \Theta(C\eps^{1/A}/\rho),
\end{equation}
by Proposition~\ref{prop:continuumfurstkatz} applied to a dilate of $Q_j$.
Here the $C$ depends on the cardinality of $S$, which is a constant
in this context.

Summing over $j$ yields
\begin{equation}
|E_\eps| 
\le |B\setminus\cup_j Q_j| + \sum_j |E_\eps\cap Q_j| 
\le C\eps^{\delta/2A} + |B|\Theta(C\eps^{1/2A}),
\end{equation}
which is a bound of the desired form.
\end{proof}

\section{Proof of Theorem~\ref{thm:discretenondegen}}

Even if $P$ is nondegenerate, the restriction of $P$ to a generic
finite set $S$ will be degenerate relative to $\{\ell_j\}$.
Indeed, if the restriction of some $\ell_i$ to $S$
is injective, then any function on $S$ takes the form
$f_i\circ\ell_i$. Thus $S$ is a more promising candidate
to be a witness, if all of the mappings $\ell_j$ are far
from being injective on $S$. This motivates the use of finite
lattices as witnesses; the hypothesis of rational commensurability
will ensure a strong failure of injectivity for suitable lattices.

$M\integers^d$ will denote the set of all $(x_1,\dots,x_d)\in\integers^d$
for which each coordinate is divisible by $M$.

Recall that any finitely generated torsion-free $\integers$-module $\scriptm$ is isomorphic
to $\integers^n$ for some unique $n$; $n$ is called the rank of $\scriptm$.
Any submodule of $\integers^n$ is finitely
generated and torsion-free.
By the {\em rank} of a homomorphism of $\integers$-modules,
we mean the rank of its range; only finitely
generated and torsion-free ranges will arise in this paper.
Let $\scriptm\subset\integers^n$ be a sub-$\integers$-module of
rank $r$, and choose elements $e_1,\cdots,e_r\in\scriptm$
such that the mapping $(x_1,\cdots,x_r)
\mapsto x\cdot e = x_1e_1+\dots+x_re_r$ defines a bijection
of $\integers^r$ onto $\scriptm$.
If $q:\scriptm\to\complex$ is a polynomial, in the
sense that $q$ can be represented as a finite linear
combination over $\complex$ of the monomials $x\cdot e\mapsto
x_1^{\gamma_1}\cdots x_n^{\gamma_n}$ with each exponent $\gamma_j$
a nonnegative integer, then such a representation is unique.

The analysis will make use of difference operators. For any vector $y$, $D_y$
denotes the operator $D_y f(x) = f(x+y)-f(x)$.
These operators all commute with one another.
If $L$ is a linear mapping then
\begin{equation} \label{pushforward}
D_y(f\circ L) = \big(D_{L(y)}(f)\big)\circ L.
\end{equation}
A version of Leibniz's rule is
\begin{equation} \label{leibniz}
D_v(fg) = D_v(f)\cdot g + f_v\cdot D_v(g) \text{ where } f_v(x) = f(x+v).
\end{equation}

We will need the following elementary property of polynomials,
whose proof is omitted.
\begin{lemma} \label{lemma:nullsatz}
For any $d,r$ there exists $N<\infty$ such that for any
polynomial $P:\reals^d\to\complex$ of degree $\le r$,
if $P(x)\equiv 0$
for all $x\in \integers^d$ satisfying $|x|\le N$,
then $P(x)=0$ for all $x\in\reals^d$.
\end{lemma}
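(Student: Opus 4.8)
The plan is to reduce the statement to the one-variable fact that a polynomial of degree at most $r$ with more than $r$ distinct roots vanishes identically, and to use a product grid of lattice points as the witnessing set. Concretely, I would choose $N$ large enough that the discrete cube $\{0,1,\dots,r\}^d$ is contained in $\{x\in\reals^d:|x|\le N\}$; for any of the standard norms $N=rd$ certainly works. It then suffices to show that every polynomial $P:\reals^d\to\complex$ of total degree $\le r$ that vanishes at every point of $\{0,1,\dots,r\}^d$ is the zero polynomial, since such $P$ vanish on $\reals^d$ once they vanish at enough points.

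I would prove this last claim by induction on $d$. The base case $d=1$ is exactly the classical fact recalled above. For the inductive step, expand $P(x_1,\dots,x_d)=\sum_{k=0}^{r} Q_k(x_2,\dots,x_d)\,x_1^{k}$, where each $Q_k$ is a polynomial of total degree $\le r$ in the remaining $d-1$ variables; here one uses that total degree $\le r$ forces the degree in $x_1$ alone to be $\le r$. Fix any $(a_2,\dots,a_d)\in\{0,\dots,r\}^{d-1}$. The univariate polynomial $x_1\mapsto P(x_1,a_2,\dots,a_d)$ has degree $\le r$ and vanishes at the $r+1$ points $x_1=0,1,\dots,r$, hence is identically zero; therefore $Q_k(a_2,\dots,a_d)=0$ for every $k$. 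As this holds for all such tuples $(a_2,\dots,a_d)$, the inductive hypothesis gives $Q_k\equiv 0$ for each $k$, and so $P\equiv 0$.

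An equivalent, perhaps cleaner, way to package the conclusion once the grid computation is in hand: the space $\mathcal{P}_r$ of polynomials of degree $\le r$ on $\reals^d$ is finite-dimensional, and the subspaces $K_N=\{P\in\mathcal{P}_r:\ P(x)=0\ \text{for all }x\in\integers^d\text{ with }|x|\le N\}$ form a nonincreasing chain as $N$ grows, hence stabilize; their stable common value is $\{P\in\mathcal{P}_r:\ P|_{\integers^d}\equiv 0\}$, which is $\{0\}$ precisely by the product-grid argument above. Either way, I do not expect any genuine obstacle: the only point that warrants a moment's care — and the reason the product grid $\{0,\dots,r\}^d$ is the natural witness — is the passage from ``total degree $\le r$'' to ``degree $\le r$ in each variable separately,'' which is what makes the one-dimensional Lagrange/root argument applicable coordinate by coordinate.
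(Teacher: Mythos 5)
Your proof is correct and complete; the paper explicitly omits the proof of this lemma as elementary, so there is nothing in the source to compare against. The product-grid induction on $d$ (reducing to the univariate fact that a degree-$\le r$ polynomial with $r+1$ roots vanishes) is exactly the standard way to fill this gap, and your choice of $N$ so that $\{0,1,\dots,r\}^d\subset\{x:|x|\le N\}$ handles the uniformity in $P$ correctly.
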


The next lemma describes solutions of certain difference equations.
\begin{lemma} \label{lemma:makepolys}
Let $(v_j)\subset\integers^n$ be any finite list of
nonzero vectors, not necessarily distinct, and let
$\scriptd$ be the difference operator $\scriptd=\prod_j D_{v_j}$.
Then there exist $C,r<\infty$,
a positive integer $M$,
and finitely many $\integers$-module homomorphisms
$\ell_\gamma:\integers^n\to\integers^{n_\gamma}$ where $n_\gamma<n$,
such that for any sufficiently large $N<\infty$ and any function
$f:\integers^n\to\complex$
which satisfies $\scriptd(f)(x)=0$ for all $x\in\integers^n$
satisfying $|x|\le N$,
there exists a representation
\begin{equation} \label{eq:makepolysrep}
f(x) = \sum_\gamma q_\gamma(x)(h_\gamma\circ\ell_\gamma)(x)
\end{equation}
valid for all $x\in M\integers^n$ satisfying $|x|\le N-C$,
where the $q_\gamma:\integers^n\to\complex$ are polynomials
of degrees at most $r$,
and $h_\gamma:\integers^{n_\gamma}\to\complex$ are some functions.
\end{lemma}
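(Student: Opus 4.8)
The plan is to prove Lemma~\ref{lemma:makepolys} by induction on the number $k$ of factors in the difference operator $\scriptd=\prod_{j=1}^k D_{v_j}$, with the base case $k=0$ being trivial ($f$ is already determined on a box by a single polynomial of degree $0$, since $\scriptd(f)\equiv 0$ on a large box simply means $f$ is constant there by Lemma~\ref{lemma:nullsatz} applied degree-wise — more precisely, $\scriptd(f)=0$ on a box forces $f$ to agree with a polynomial of controlled degree on a slightly smaller box). Actually the cleaner base case is $k=1$: if $D_{v_1}f(x)=0$ for all $|x|\le N$, then $f$ is constant along each coset of $\integers v_1$ meeting the box, so $f(x) = h\circ\ell(x)$ where $\ell:\integers^n\to\integers^n/(\integers v_1\cap\text{(primitive part)})$ — we take $\ell_\gamma$ to be the quotient homomorphism onto a rank-$(n-1)$ free module, which drops the rank as required. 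The point is to set up the induction so that peeling off one difference operator $D_{v_k}$ reduces $n$ or reduces $k$.

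For the inductive step, write $\scriptd = D_{v_k}\scriptd'$ where $\scriptd'=\prod_{j<k} D_{v_j}$. Given $f$ with $\scriptd(f)=0$ on $\{|x|\le N\}$, the function $g = \scriptd'(f)$ satisfies $D_{v_k}(g)=0$ on a box of radius $N-C'$ (where $C'$ bounds the displacement incurred by the finitely many shifts in $\scriptd'$), so by the $k=1$ case $g(x) = h_0\circ\ell_0(x)$ on $M_0\integers^n\cap\{|x|\le N-C'-C''\}$ for a rank-$(n-1)$ homomorphism $\ell_0$. Now I need to solve the inhomogeneous equation $\scriptd'(f) = g$ with $g$ of this special form. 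The idea is to find a particular solution: since $\ell_0$ has rank $n-1$, choose coordinates so that $\ell_0$ is (essentially) projection onto the first $n-1$ coordinates, and $v_j$ for $j<k$ have controlled coordinates; one can antidifference in the last coordinate repeatedly to produce a particular solution $f_0(x) = q(x)\cdot(h_0\circ\ell_0)(x) + \text{lower terms}$, where antidifferencing $h_0\circ\ell_0$ against $D_{v_j}$ — using the pushforward identity \eqref{pushforward} and the Leibniz rule \eqref{leibniz} — produces polynomial prefactors times functions of the same lower-rank variables, plus possibly functions on a further quotient. Then $f - f_0$ is annihilated by $\scriptd'$ (which has only $k-1$ factors) on a slightly smaller box, and the inductive hypothesis applied to $f-f_0$ finishes, after merging the homomorphisms (take a common $M$, and combine $\ell_0$ with the homomorphisms from the inductive step by passing to a common refinement, noting all have rank $<n$).

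The main obstacle I anticipate is the bookkeeping in constructing the particular solution $f_0$ when the vectors $v_j$ ($j<k$) are \emph{not} aligned with the kernel direction of $\ell_0$ — i.e., when $\ell_0(v_j)\ne 0$. In that case $D_{v_j}$ does not act trivially on $h_0\circ\ell_0$, and antidifferencing is not simply "integrating a polynomial in one variable times a fixed function." The resolution is that $D_{v_j}(h_0\circ\ell_0) = (D_{\ell_0(v_j)}h_0)\circ\ell_0$ is again of the form "function composed with $\ell_0$," so one stays within the class of functions that are polynomials-in-$x$ times (functions $\circ$ $\ell_0$); repeated antidifferencing against $\scriptd'$ then stays in this class provided one allows the polynomial degrees $r$ and the modulus $M$ to grow (boundedly, depending only on $\scriptd$, $n$, $d$). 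One must check that a particular solution genuinely exists on $M\integers^n\cap\{|x|\le N-C\}$ — this is where $M$ enters, to ensure divisibility so that the discrete antiderivatives land back in $\integers^n$ — and that the degree bound $r$ and constant $C$ depend only on the data $(v_j)$, $n$, not on $N$ or $f$. A secondary technical point is tracking the loss $N\mapsto N-C$ through each of the finitely many steps and verifying $C$ is absolute; this is routine once the shifts are catalogued. I would organize the write-up so that all the homomorphisms produced are quotients of $\integers^n$ by sub-lattices containing a fixed nonzero vector, keeping ranks strictly below $n$ and making the merge step at the end transparent.
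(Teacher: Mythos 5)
Your proposal is correct and follows essentially the same route as the paper's sketch: induction on the number of difference factors, resting on the same two mechanisms — that a $D_v$-invariant function on a suitable sublattice $M\integers^n$ factors through a corank-one homomorphism, and that the class of sums of (polynomial)$\cdot$(function $\circ$ lower-rank homomorphism) is closed under discrete antidifferentiation via \eqref{pushforward} and \eqref{leibniz}, distinguishing $\ell_\gamma(v_j)=0$ from $\ell_\gamma(v_j)\ne 0$ and replacing $v_j$ by $Mv_j$ to stay on the sublattice. The only difference is organizational: you peel $\scriptd=D_{v_k}\scriptd'$, build a full particular solution of $\scriptd'(f_0)=h_0\circ\ell_0$, and recurse on $f-f_0$, whereas the paper applies the inductive hypothesis to $D_w(f)$ and performs a single antidifference per level; since the $D_{v_j}$ commute, the two recursions are equivalent.
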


\begin{proof}[Sketch of proof]
Proceed by induction the number of factors $D_{v_j}$. Thus
suppose it to be given $\scriptd D_w(f)=0$ vanishes for all
$x$ in the region indicated, where $w\ne 0$ and $\scriptd$
is as above. Applying the induction hypothesis gives
a representation
$D_w(f)(x) = \sum_\gamma q_\gamma(x)(h_\gamma\circ\ell_\gamma)(x)$
of the above form,
for $x\in M\integers^n$ satisfying $|x|\le N-C$.

It is awkward to proceed further, because $w$ need not lie in $M\integers^n$.
However,
$D_w\scriptd(f)=\scriptd D_w(f)$ vanishes for $|x|\le N$,
from which it follows that $\scriptd D_{Mw} (f)=D_{Mw}\scriptd(f)$
also vanishes for $|x|\le N-C(|w|)$; note that $M$ depends only
on $\{v_j\}$, not on $f$. Thus we may suppose from the outset
that $w\in M\integers^n$.

It is straightforward to solve the equation
$D_w(F)(x) = \sum_\gamma q_\gamma(x)(h_\gamma\circ\ell_\gamma)(x)$
with a solution $F$ in the desired form \eqref{eq:makepolysrep},
in the region $M\integers^n\cap\{x: |x|\le N-C\}$,
with the initial condition that $F$ vanishes on a suitable
submodule of rank $n=1$ which does not contain $w$.
This equation is solved term-by-term, distinguishing the terms
for which $\ell_\gamma(w)=0$ from those for which $\ell_\gamma(w)\ne 0$.

Finally since $D_w(f-F)\equiv 0$ on an appropriate domain,
it must take the form $h\circ\ell$, where $\ell$ has rank $n-1$
and $\ell(w)=0$.
\end{proof}

\begin{lemma} \label{lemma:polynomialrep}
Let $A$ be a finite set of indices,
$L_\alpha:\integers^d\to\integers^{d_\alpha}$ be $\integers$--linear mappings,
and let $f_\alpha$ be arbitrary functions.
Let $P:\integers^d\to\complex$ be a polynomial which takes the form
\begin{equation} \label{givendecomp}
P(x) = \sum_{\alpha\in A} (f_\alpha\circ L_\alpha)(x)
\text{ for all $x\in\integers^d$ satisfying $|x|\le N$.}
\end{equation}
If $N$ is sufficiently large
then there exist polynomials $p_\alpha$ and positive integers $M,N^*$
such that
\begin{equation}\label{desireddecomp}
P(x) = \sum_{\alpha} \big(p_{\alpha} \circ L_\alpha\big)(x)
\end{equation}
for all $x\in M\integers^d$ satisfying $|x|\le N^*$.
$N^*\to\infty$ as $N\to\infty$,
while $M$ and the degrees of the $p_\alpha$
remain uniformly bounded, provided that
the linear mappings $L_\alpha$
and the degree of $P$ remain fixed.
\end{lemma}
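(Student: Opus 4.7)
I would prove the lemma by induction on $D:=\max_\alpha d_\alpha$, with an inner induction on $|\{\alpha:d_\alpha=D\}|$. The case $d_{\alpha_0}=d$ (some $L_{\alpha_0}$ a bijection) is dispatched at once by $p_{\alpha_0}(y):=P(L_{\alpha_0}^{-1}(y))$ and $p_\alpha:=0$ for $\alpha\neq\alpha_0$; the case $D=0$ is trivial since $P$ is then constant. Otherwise $0<D<d$, and after preprocessing to merge summands with equal kernels (using that $\ker L_\alpha=\ker L_{\alpha_0}$ forces $L_\alpha=A\circ L_{\alpha_0}$ for an invertible $A$), I fix $\alpha_0$ with $d_{\alpha_0}=D$. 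For each $\alpha\neq\alpha_0$, the dimension count $\dim\ker L_\alpha\geq\dim\ker L_{\alpha_0}$ combined with distinct kernels forbids the containment $\ker L_\alpha\subset\ker L_{\alpha_0}$, so I may pick a nonzero $v_\alpha\in(\ker L_\alpha)\cap\integers^d$ with $L_{\alpha_0}(v_\alpha)\neq 0$.

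Applying $\scriptd:=\prod_{\alpha\neq\alpha_0}D_{v_\alpha}$ to $P=\sum_\alpha f_\alpha\circ L_\alpha$ annihilates every $\alpha\neq\alpha_0$ term (since $D_{v_\alpha}(f_\alpha\circ L_\alpha)=0$), leaving
\[
\scriptd P(x)=(\tilde{\scriptd}f_{\alpha_0})(L_{\alpha_0}(x)),\qquad\tilde{\scriptd}:=\prod_{\alpha\neq\alpha_0}D_{L_{\alpha_0}(v_\alpha)}.
\]
The left side is a polynomial depending only on $L_{\alpha_0}(x)$ on a large lattice ball, so Lemma~\ref{lemma:nullsatz} lifts this to an identity $\scriptd P=\tilde P_0\circ L_{\alpha_0}$ of polynomials. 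Choosing a polynomial $g$ with $\tilde{\scriptd}g=\tilde P_0$ (an elementary antidifferentiation) gives $\tilde{\scriptd}(f_{\alpha_0}-g)=0$ on a lattice ball, and Lemma~\ref{lemma:makepolys} then writes $f_{\alpha_0}(y)-g(y)=\sum_\gamma q_\gamma(y)\,h_\gamma(\ell_\gamma(y))$ on a sublattice of $\integers^{d_{\alpha_0}}$, with polynomial $q_\gamma$ of bounded degree and $\ell_\gamma:\integers^{d_{\alpha_0}}\to\integers^{n_\gamma}$ of rank $n_\gamma<d_{\alpha_0}$. Substituting back and moving $g\circ L_{\alpha_0}$ to the left yields
\[
P(x)-g(L_{\alpha_0}(x))=\sum_\gamma q_\gamma(L_{\alpha_0}(x))\,h_\gamma(\tilde L_\gamma(x))+\sum_{\alpha\neq\alpha_0}f_\alpha(L_\alpha(x))
\]
with $\tilde L_\gamma:=\ell_\gamma\circ L_{\alpha_0}$ of rank $<D$.

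The principal obstacle is that the new $\gamma$-summands carry polynomial prefactors $q_\gamma(L_{\alpha_0}(x))$ and use new linear maps $\tilde L_\gamma$, so this identity is not of the form covered by the lemma as stated. I resolve this by formulating and proving in parallel a strengthened version: if $P=\sum_\alpha Q_\alpha\cdot(f_\alpha\circ L_\alpha)$ with $P$ and $Q_\alpha$ polynomial, then on a sublattice ball $P=\sum_\alpha Q_\alpha\cdot(p_\alpha\circ L_\alpha)$ with polynomial $p_\alpha$. The strengthened version admits the same lexicographic induction, the crucial point being that $D_{v_\alpha}^{\deg Q_\alpha+1}[Q_\alpha(f_\alpha\circ L_\alpha)]=0$ whenever $v_\alpha\in\ker L_\alpha$, since then $f_\alpha\circ L_\alpha$ is $D_{v_\alpha}$-constant and a polynomial of degree $\deg Q_\alpha$ is annihilated by $D_{v_\alpha}^{\deg Q_\alpha+1}$. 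This allows the same killing-by-difference-operators step in the new setting, reducing $|\{\alpha:d_\alpha=D\}|$ by one at each step.

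Applying the strengthened form to the displayed identity replaces each $h_\gamma$ by a polynomial $p_\gamma^*$ and each $f_\alpha$ ($\alpha\neq\alpha_0$) by a polynomial $p_\alpha^*$. The resulting $\gamma$-summands $q_\gamma(L_{\alpha_0}(x))\,p_\gamma^*(\ell_\gamma(L_{\alpha_0}(x)))$ are polynomials in $L_{\alpha_0}(x)$ and collect into $F\circ L_{\alpha_0}$ for a single polynomial $F$ on $\reals^{d_{\alpha_0}}$, so that setting $p_{\alpha_0}:=g+F$ completes the representation $P=p_{\alpha_0}\circ L_{\alpha_0}+\sum_{\alpha\neq\alpha_0}p_\alpha^*\circ L_\alpha$. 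The main difficulty throughout will be the difference Leibniz computation needed to handle the strengthened form when $Q_{\alpha_0}$ itself is nonconstant, together with the bookkeeping of nested sublattices and shrinking balls inherited from the successive applications of Lemma~\ref{lemma:makepolys}.
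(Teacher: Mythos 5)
Your outline reproduces the paper's strategy in broad strokes: isolate a term of maximal rank by applying a product of difference operators $D_{v_\alpha}$ with $v_\alpha\in\ker L_\alpha\setminus\ker L_{\alpha_0}$, use Lemma~\ref{lemma:makepolys} to peel off a polynomial part and lower the rank, and run a lexicographic induction on (maximal rank, number of maximal-rank terms) for a strengthened statement that tolerates polynomial prefactors. The first pass, where all prefactors are constant, is correct as you describe it. But the entire difficulty of the lemma is concentrated in the inductive step of the strengthened statement, and that is exactly the step you do not supply. When $Q_{\alpha_0}$ is nonconstant, the surviving term after applying $\scriptd^a$ is, by the difference Leibniz rule \eqref{leibniz}, not $(\tilde\scriptd f_{\alpha_0})\circ L_{\alpha_0}$ but a sum $\sum_i R_i(x)\,g_i(L_{\alpha_0}(x))$ in which the $g_i$ are several \emph{distinct} unknown iterated differences of $f_{\alpha_0}$ and the $R_i$ are polynomials in $x$ that do \emph{not} factor through $L_{\alpha_0}$. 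One cannot antidifferentiate such an identity, and Lemma~\ref{lemma:nullsatz} no longer shows that the polynomial side factors through $L_{\alpha_0}$. Worse, in the prefactor setting you can no longer merge summands with equal kernels (since $Q_1\cdot(f_1\circ L)+Q_2\cdot(f_2\circ L)$ with $Q_1\ne Q_2$ is not a single term), so in general several maximal-rank terms with the same nullspace survive the differencing and their unknown functions are coupled in one equation. The paper resolves this with the decomposition $\integers^d\supset U+V$ (nullspace plus complement), extraction of the top homogeneous part of the prefactors, and the Case 1 / Case 2 dichotomy in which Case 2 solves for one homogeneous top-degree coefficient in terms of the others and runs a secondary descent on the degrees of the prefactors. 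Your sentence acknowledging that ``the main difficulty \dots will be the difference Leibniz computation'' names this obstacle but does not overcome it; as written the induction does not close.

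A secondary structural problem: your strengthened statement takes arbitrary polynomial prefactors $Q_\alpha$ on $\integers^d$ and concludes $P=\sum_\alpha Q_\alpha\cdot(p_\alpha\circ L_\alpha)$. That conclusion is not of the form \eqref{desireddecomp} unless each $Q_\alpha$ itself factors through the corresponding linear map, and nothing in your formulation forces the prefactors generated at later stages of the recursion to do so. The paper's version of the strengthened hypothesis, \eqref{givendecomp2}, builds this in by requiring prefactors of the form $Q_{\alpha,j}\circ L_\alpha$ with $\nullspace(L_\alpha)\subset\nullspace(\ell_{\alpha,j})$, so that every term at every stage factors through one of the \emph{original} maps $L_\alpha$; only then does the terminal case (all $\ell_{\alpha,j}$ of rank zero) collapse to the desired decomposition. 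You would need to reformulate your strengthened lemma to carry this invariant, and then supply the degree-reduction argument for the surviving maximal-rank terms. Minor further points: the induction should be on the ranks of the maps rather than the target dimensions $d_\alpha$ (the lemma does not assume surjectivity), and the merging of summands with equal kernels over $\integers$ requires passing to a sublattice since the change of basis $A$ need only be invertible over $\rationals$.
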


The functions $f_\alpha$ in such a decomposition
$P = \sum_\alpha f_\alpha\circ L_\alpha$
are not necessarily polynomials. There are also cases in which they are necessarily
polynomials, but are not necessarily unique.

A related result was established in \cite{multiosc}: If
a polynomial $P$ admits a decomposition $P(x)=\sum_\alpha f_\alpha\circ L_\alpha$
on $\reals^d$, where the $f_\alpha$ are merely distributions, then
it admits such a decomposition with those distributions replaced by
polynomials. The simple proof given in \cite{multiosc} does not seem to
adapt directly to the present discrete setting.

By admissible data we mean the collection of mappings $L_\alpha$, and the degree of $P$.
It will be important, in both the proof and application of Lemma~\ref{lemma:polynomialrep},
that $N^*,M$ and the degrees of $p_\alpha$ depend only on admissible data.
We will say that a polynomial has
bounded degree if its degree is bounded above by a quantity which
depends only on admissible data.
Likewise, by a {\em large finite submodule} of $\integers^{n}$
we mean, in the context of Lemma~\ref{lemma:polynomialrep},
the set of all $x=(x_1,\cdots,x_{n})\in \integers^{n}$
such that $|x|\le N^\sharp$
and each coordinate $x_j$ is divisible by some $M^\sharp$,
where $N^\sharp\to\infty$ while $M^\sharp$ remains uniformly
bounded,
as the parameter $N$ given in the hypotheses tends to $\infty$.
$N^\sharp,M^\sharp$ are permitted to depend on admissible data.
This is of course an abuse of language, since these ``submodules''
are not closed under addition.

\begin{proof}[Conclusion of proof of Theorem~\ref{thm:discretenondegen}]
If $P$ is not nondegenerate with a finite witness, then
Lemma~\ref{lemma:polynomialrep}, applied to the polynomials $x\mapsto P(Nx)$,
asserts that for any open ball $B\subset\reals^d$ centered at the origin,
for each sufficiently large integer $N$, $P|_{B\cap MN^{-1}\integers^d}$
can be expressed in the form $\sum_j Q_j\circ \ell_j$,
where the polynomials $Q_j$ may potentially depend on $N$,
but have uniformly bounded degrees. By Lemma~\ref{lemma:nullsatz},
applied again to $x\mapsto P(MN^{-1}x)$ for a certain constant $M$,
this implies that $P-\sum_j Q_j\circ\ell_j$ vanishes
identically on $\reals^d$. Thus $P$ is degenerate
relative to $\{\ell_j\}$.
\end{proof}

\medskip
In the proof of Lemma~\ref{lemma:polynomialrep}, the quantity $M$ appearing in
its conclusion will repeatedly be replaced
by a larger multiple of itself throughout an inductive procedure.
All of these quantities will be denoted by the same symbol $M$,
with the understanding that $M$ is always bounded above by a quantity
which depends only on  admissible data.

\begin{proof}[Proof of Lemma~\ref{lemma:polynomialrep}]
Lemma~\ref{lemma:polynomialrep} will be proved by an inductive scheme which
involves more general representations of $P$.
To set this up, suppose that
$P:\integers^d\to\complex$ is a polynomial which takes the form
\begin{equation} \label{givendecomp2}
P(x) = \sum_{\alpha\in A}\sum_j (Q_{\alpha,j}\circ L_\alpha)
\cdot(h_{\alpha,j}\circ\ell_{\alpha,j})(x)
\end{equation}
for all $x\in\integers^d$ satisfying $|x|\le N$.
Here $A$ is a finite set of indices,
$j$ ranges over a finite set of indices for each $\alpha\in A$,
$L_\alpha,\ell_{\alpha,j}$ are $\integers$--linear mappings from $\integers^d$ to
some $\integers^{n(\alpha)}$ and $\integers^{n(\alpha,j)}$, respectively,
\[
\nullspace(L_\alpha)\subset\nullspace(\ell_{\alpha,j}),
\]
$h_{\alpha,j}$ are arbitrary functions, and
$Q_{\alpha,j}$ are polynomials with domains $\integers^{n(\alpha,j)}$.
Admissible data are now the collection of
mappings $L_\alpha,\ell_{\alpha,j}$, and the degrees of $P,Q_{\alpha,j}$.
Suppose finally that there exists at least one pair $(\alpha,j)$ for which
$\ell_{\alpha,j}$ has positive rank; otherwise \eqref{givendecomp2} would
already be a representation of the desired form.

As the conclusion of the inductive step,
we claim that if $N$ is sufficiently large then there exist
polynomials $Q_{\alpha,k}^*$, linear mappings $\ell_{\alpha,k}^*$,
functions $h_{\alpha,k}^*$, and integers $M,N^*$,
such that
\begin{equation}\label{desireddecomp2}
P(x) = \sum_{\alpha\in A}\sum_k (Q_{\alpha,k}^* \circ L_\alpha)
\cdot (h_{\alpha,k}^*\circ\ell_{\alpha,k}^*)(x)
\end{equation}
for all $x\in M\integers^d$ satisfying $|x|\le N^*$.
Here $A$ is the same index set as in \eqref{givendecomp2},
$k$ ranges over a finite index set for each $\alpha\in A$,
and
$\nullspace(L_\alpha)\subset\nullspace(\ell_{\alpha,k}^*)$.
The index sets over which $k$ ranges need not coincide with those
over which $j$ ranges in \eqref{givendecomp2}, and in practice
will be larger.
Moreover as $N\to\infty$, $N^*\to\infty$ while
$M$ and the degrees of the $Q_{\alpha,k}^*$ remain uniformly bounded,
provided that the admissible data remain fixed.
Finally, and essentially,
we claim that there exists such a representation
\eqref{desireddecomp2} which is simpler
than the given one \eqref{givendecomp2}, in the sense that either
the maximum (over all pairs $\alpha,k$) of the ranks of the $\ell_{\alpha,k}^*$
is strictly less than the maximum rank
of all $\ell_{\alpha,j}$, or that the two maxima are equal and that the
number of index pairs $(\alpha,k)$ for which
$\ell_{\alpha,k}^*$ has maximal rank is strictly less than
the number of pairs $(\alpha,j)$ for which
$\ell_{\alpha,j}$ has maximal rank.

At each stage of the induction,
a hypothesis on $\integers^d$ leads to a conclusion only on some $M\integers^d$,
but then $M\integers^d$ can be reidentified with $\integers^d$ in the following step.
Finitely many induction steps bring us to the situation in which
every $\ell_{\alpha,j}$ has rank zero.
In that case, each $h_{\alpha,j}\circ\ell_{\alpha,j}$
is a constant. For each $\alpha$, $\sum_j (Q_{\alpha,j}\circ L_\alpha)\cdot
(h_{\alpha,j}\circ\ell_{\alpha,j})$ is the composition of a single polynomial with $L_\alpha$,
so \eqref{givendecomp2} has the desired form and the proof would be complete.
Thus in order to prove the lemma, it suffices to carry out this induction step.

Supposing that some $\ell_{\alpha,j}$ has nonzero rank,
choose some $(\alpha_0,j_0)$ such that $\ell_{\alpha_0,j_0}$ has maximal rank.
Equivalently, $\nullspace(\ell_{\alpha_0,j_0})\subset\rationals^d$
has minimal dimension among all such nullspaces, as vector spaces over $\rationals$.

Let $(\beta,k)$ be an arbitrary order pair of indices. If
\begin{equation} \label{betacondition}
\nullspace(\ell_{\beta,k})\ne\nullspace(\ell_{\alpha_0,j_0})
\end{equation}
then by minimality,
$\nullspace(\ell_{\beta,k})$ is not a subset of $\nullspace(\ell_{\alpha_0,j_0})$.
For each such pair $(\beta,k)$ choose some vector
\[y_{\beta,k}\in \integers^d\cap \nullspace(\ell_{\beta,k})
\setminus\nullspace(\ell_{\alpha_0,j_0}).\]
Define
\begin{equation} \label{scriptddefn}
\scriptd = \prod_{(\beta,k)}D_{y_{\beta,k}}
\end{equation}
where the composition product is taken over all ordered pairs of indices $(\beta,k)$ satisfying
\eqref{betacondition}.
Then
\[
\scriptd(f\circ\ell_{\beta,k})\equiv 0 \text{ on } \integers^d
\]
for all $(\beta,k)$ satisfying \eqref{betacondition},  for all functions $f$.
Indeed, $D_{y_\beta,k}$ annihilates
all such functions, and the factors in \eqref{scriptddefn} all commute.

Consequently there exists a positive integer $a$ such that
\begin{equation}
\scriptd^a\big((Q_{\beta,k}\circ L_\beta)\cdot(f\circ\ell_{\beta,k})\big)
\equiv 0
\end{equation}
for all $(\beta,k)$ satisfying \eqref{betacondition};
$a$ may be chosen to depend only on the degrees of the given polynomials
$Q_{\alpha,j}$. This follows from Leibniz's rule \eqref{leibniz}.
For $D_v^b$ annihilates any polynomial of degree strictly less than $b$,
for any vector $v$, and $Q_{\alpha,j}\circ L_\alpha$ is a polynomial on $\integers^d$
whose degree does not exceed that of $Q_{\alpha,j}$.

We may also choose $a$ sufficiently large to ensure that
$\scriptd^a(P)\equiv 0$. Therefore
\begin{equation} \label{reducedvanishing}
\sum'_{\alpha,j} \scriptd^a\big((Q_{\alpha,j}\circ L_\alpha)
\cdot(h_{\alpha,j}\circ\ell_{\alpha,j}) \big)
\equiv 0,
\end{equation}
where the notation $\sum'_{\alpha,j}$ indicates that the sum is taken over all
pairs $(\alpha,j)$ which satisfy
\begin{equation} \label{alphacondition}
\nullspace(\ell_{\alpha,j}) = \nullspace(\ell_{\alpha_0,j_0}).
\end{equation}
\eqref{reducedvanishing} holds at all points $x\in\integers^d$
which satisfy $|x|\le N-C$, where $C$ depends only on the vectors
$y_\beta$ and the exponent $a$, thus only on permissible quantities;
in particular, $C$ is independent of $N$.

Define $U\subset\integers^d$ to be the nullspace of $\ell_{\alpha_0,j_0}$.
Choose a sub-$\integers$-module $V\subset\integers^d$ which is complementary to $U$
in the sense that
$U,V$ are linearly independent over $\rationals$, and $U\cup V$ spans
$\rationals^d$ over $\rationals$.
Then $U+V$ contains $M\integers^d$, for some positive integer $M$ which
depends only on $\ell_{\alpha_0,j_0}$.
We write $(u,v)$ to denote an arbitrary point $(u,v)=u+v= (u,0)+(0,v)\in U+V$.

For each pair $(\alpha,j)$ satisfying \eqref{alphacondition},
$(h_{\alpha,j}\circ\ell_{\alpha,j})(u,v)
\equiv (h_{\alpha,j}\circ\ell_{\alpha,j})(0,v)$.
The factor $(Q_{\alpha,j}\circ L_\alpha)(u,v)$ potentially depends
on both variables, but any dependence on $v$ can be incorporated
into $h_{\alpha,j}$ since $\ell_{\alpha,j}$ is injective on $V$
for all pairs $(\alpha,j)$ satisfying \eqref{alphacondition}.
Thus there are representations
\[
(Q_{\alpha,j}\circ L_\alpha)(u,v) \cdot
(h_{\alpha,j}\circ\ell_{\alpha,j})(u,v)
= \sum_r
(\tilde Q_{\alpha,j,r}\circ L_\alpha)(u,0) \cdot
(\tilde h_{\alpha,j,r}\circ\ell_{\alpha,j})(0,v)
\]
where $r$ runs over a finite index set which depends on $(\alpha,j)$, 
and the $\tilde Q_{\alpha,j,r}$
are polynomials. Both the cardinalities of these index sets,
and the degrees of these polynomials, are bounded above by
quantities which depend only on admissible data.

Let $K$ be the maximum degree of
all the polynomials $\tilde Q_{\alpha,j,r}$.
Decompose $\tilde Q_{\alpha,j,r} = Q^\dagger_{\alpha,j,r}
+ R_{\alpha,j,r}$ where $Q^\dagger_{\alpha,j,r}$ is
homogeneous of degree $K$,
while the remainders $R_{\alpha,j,r}$ have degrees strictly less than $K$.
By Leibniz's rule \eqref{leibniz},
\begin{equation} \label{secondreducedvanishing}
0 \equiv
\sum'_{\alpha,j} \sum_{r} (Q^\dagger_{\alpha,j,r}\circ L_\alpha)(u,0)
\cdot
\scriptd^a\big((\tilde h_{\alpha,j,r}\circ\ell_{\alpha,j})(0,v)\big)
+ R(u,v)
\end{equation}
 for all $(u,v)\in U+V$ satisfying $|(u,v)|\le N-C$,
where $C<\infty$ depends only on admissible data,
and $R$ can be expressed as a polynomial in $u$ of degree
$\le K-1$, whose coefficients are functions of $v$.
Each term
$(Q^\dagger_{\alpha,j,r}\circ L_\alpha)(u,0)$ is a homogeneous polynomial
of degree $K$. 

Since the degrees of all polynomials in play here are bounded uniformly
in $N$, it follows from
\eqref{secondreducedvanishing} and Lemma~\ref{lemma:nullsatz} that
\begin{equation} \label{thirdreducedvanishing}
0 \equiv
\sum'_{\alpha,j} \sum_{r} (Q^\dagger_{\alpha,j,r}\circ L_\alpha)(u,0)
\cdot
\scriptd^a\big((\tilde h_{\alpha,j,r}\circ\ell_{\alpha,j})(0,v)\big),
\end{equation}
again for all $(u,v)\in U+V$ satisfying $|(u,v)|\le N-C$.

There are now two cases. In Case 1,
\begin{equation} \label{firstcase}
\scriptd^a\big((\tilde h_{\alpha,j,r}\circ\ell_{\alpha,j})(0,v)\big)\equiv 0
\end{equation}
for all $(0,v)\in V$ satisfying $|v|\le N-C$,
for each pair $(\alpha,j)$ satisfying \eqref{alphacondition}.
By \eqref{pushforward},
\[
\scriptd^a\big((\tilde h_{\alpha,j,r}\circ\ell_{\alpha,j})\big)
= (\scriptd'_{\alpha,j}(\tilde h_{\alpha,j,r})\big)\circ\ell_{\alpha,j}
\]
where
\[
\scriptd'_{\alpha,j} = \prod_{\beta,k} D_{\ell_{\alpha,j}(y_{\beta,k})}^a,
\]
with the product taken over all pairs $(\beta,k)$ satisfying \eqref{betacondition}.
Since $\ell_{\alpha,j}(y_{\beta,k})\ne 0$,
Lemma~\ref{lemma:makepolys} asserts that
for each such pair $(\alpha,j)$, the restriction
of $\tilde h_{\alpha,j}$ to $\ell_{\alpha,j}(V)$
can be decomposed as a finite sum of terms,
each of which is the product of a polynomial of uniformly bounded degree
with a function of the form
$h^\sharp\circ\ell^\sharp\circ\ell_{\alpha,j}$
for some $\integers$--linear mapping $\ell^\sharp$
whose rank is strictly less than the rank of $\ell_{\alpha,j}$,
and some function $h^\sharp$ whose domain is the range of $\ell^\sharp$.
This representation holds on the set of all $(0,v)\in MV$
satisfying $|v|\le cN-C$, where $M,c,C$ depend
only on admissible data.
Any polynomial composed with $\ell_{\alpha,j}$
can be rewritten as a polynomial composed with $L_\alpha$,
since $\nullspace(L_\alpha)\subset\nullspace(\ell_{\alpha,j})$
by hypothesis.
Since $M\integers^d\subset U+V$ for some positive integer $M$,
we have reduced matters to a situation which
satisfies the hypothesis of the induction step
on a large finite submodule of $\integers^d$.

Consider next Case 2, in which there exists at least one pair $(\alpha,j)$ for which
\eqref{firstcase} fails to hold.
Choose $\gamma,i,r$ and some $(0,v_1)\in V$ such that
$(\scriptd'_{\gamma,i}(\tilde h_{\gamma,i,r})\big)\circ\ell_{\gamma,i}(0,v_1)\ne 0$.
Specialize \eqref{thirdreducedvanishing} to $v=v_1$
and solve the resulting equation for $(Q^\dagger_{\gamma,i,r}\circ L_{\gamma})(u,0)$ as a
$\complex$-linear combination of the other $(Q^\dagger_{\alpha,j,s}\circ L_\alpha)(u,0)$.
The term
$(Q^\dagger_{\gamma,i,r}\circ L_{\gamma})(u,0)\cdot
\scriptd^a\big(\tilde h_{\gamma,i}\circ\ell_{\gamma}(0,v) \big)$
is thus expressed as a $\complex$-linear combination of hybrid terms
\[(Q^\dagger_{\alpha,j,s}\circ L_{\alpha})(u,0)\cdot
\scriptd^a\big(\tilde h_{\gamma,i,s}\circ\ell_{\gamma,i}(0,v) \big)\]
for all $(u,v)$ in a large finite submodule of $U+V$.
However, since $\ell_{\alpha,j}\big|_V$ is injective, each
$\tilde h_{\gamma,i,s}\circ\ell_{\gamma,i}(0,v)$
can be reexpressed in the form
$h^\flat_{\alpha,j,s}\circ\ell_{\alpha,j}(0,v)$, and thus
each hybrid term is reexpressed as
\[(Q^\dagger_{\alpha,j,s}\circ L_{\alpha})(u,0)\cdot
\scriptd^a\big(h^\flat_{\alpha,j,s}\circ\ell_{\alpha,j}(0,v) \big).\]

The result is that the degree of at least one of the polynomials
$\tilde Q_{\alpha,j,r}$ has been decreased, and the degrees of none have increased. 
This process can be iterated
until either  Case 1 eventually arises, or all $\tilde Q_{\alpha,j,r}$ have degree zero.
In the former event, the proof is complete by induction.

In the latter event, the sum 
$\sum'_{\alpha,j} \big(Q_{\alpha,j}\circ L_\alpha\big)
\cdot\big(h_{\alpha,j}\circ \ell_{\alpha,j}\big)$ 
which appeared in the initial representation of $P$ can be rewritten
more simply as a single term $h^\dagger_{\alpha_0}\circ\ell_{\alpha_0}$.
The relation \eqref{reducedvanishing} becomes simply
$\scriptd^a\big(h^\dagger_{\alpha_0}\circ\ell_{\alpha_0} \big)\equiv 0$.
This again makes Lemma~\ref{lemma:makepolys} applicable, so $h^\dagger_{\alpha_0}$
can be represented,
on a large finite submodule of its domain,
as a finite sum of products of
polynomials of bounded degrees multiplied by
functions composed with linear mappings of ranks strictly
less than the rank of $\ell_{\alpha_0}$, all of which factor
through $\ell_{\alpha_0}$, and hence through $L_{\alpha_0}$. 
Thus matters are again reduced to
a prior induction step, completing the proof.
\end{proof}

\begin{remark}
Suppose that $d_j=1$ for all $1\le j\le n$, 
so that $\ell_j:\reals^d\to\reals^1$, 
and that $\kernel(\ell_i)\ne\kernel(\ell_j)$ whenever $i\ne j$.
If $P:\reals^d\to\reals$ is a polynomial,
and if $P = \sum_j f_j\circ\ell_j$ for certain distributions $f_j$
defined in $\reals^1$, 
then necessarily each $f_j$ is a polynomial. Moreover, the degree of $f_j$
is majorized by a quantity depending only on $d,n$ and the degree of $P$.
This fact holds without the hypothesis of rational commensurability;
it can be proved by a variant of the reasoning used in the proof
of Theorem~\ref{thm:discretenondegen},
with difference operators replaced by differential operators
$\prod_{k\ne j} (v_k\cdot\nabla)^{a_k}$
where $\ell_k(v_k)=0$ for all $k\ne j$ but $\ell_j(v_j)\ne 0$,
and with the exponents $a_k$ chosen so that the degree of 
the operator exceeds the degree of $P$.

However, without the restriction $d_j=1$, the $f_j$ need not be polynomials; 
$0$ can be represented as $f_1(x_2,x_3)+f_2(x_1,x_3)+f_3(x_1,x_2)$ in many ways.
This difficulty is responsible for much of the complexity in the above proof.
\end{remark}

\section{Two extensions}

We discuss here the proofs for two extensions,
Theorems~\ref{thm:smoothcase} and Theorem~\ref{thm:periodicsublevelsets}. 
The former concerns $C^\infty$ phases which
are not necessarily polynomials, but satisfy a finite order nondegeneracy
condition. 

\begin{proof}[Proof of Theorem~\ref{thm:smoothcase}]
Let $P$ be a $C^\infty$ real-valued function satisfying the hypothesis,
and let a small $\eps>0$ be given.
Fix a large positive integer $N$, to be specified below.
Let $\rho>0$ be a function of $\eps$, to be determined.
Partition a neighborhood of $U$ into cubes $Q_k$, each of sidelength $\rho$.
For each $k$ let $P_k$ be the Taylor polynomial of degree $N-1$
for $P$ at the center point $c_k$ of $Q_k$.
Then 
\begin{equation}
|(P-P_k)(x)|\le C\rho^N \text{ for all $x\in Q_k$}.
\end{equation}

Define $L_k(y) = c_k+\rho y$, and
$\tilde P_k = P_k\circ L_k$. 
$\ell_{j}\circ L_k$ is now affine linear rather than linear,
but for any function $f_j$
we can write $f_j\circ\ell_j\circ L_k = \tilde f_j\circ\ell_{j}$
where $\tilde f_j$ is an appropriate translate and dilate of $f_j$,
depending on $j,k$.
If $N$ is sufficiently large then $\tilde P_k$ 
as a mapping whose domain is the unit cube centered at the origin,
is nondegenerate relative to the (affine) linear transformations $\ell_j\circ L_k$.
Our hypotheses do not guarantee that these polynomials are nondegenerate uniformly
in $k$ in any sense, but 
as in the proof of Proposition~\ref{prop:sublevel1}, 
if $\rho$ is chosen to be an appropriate positive power of $\eps$ then
the identity
$\sum_{s\in S} c_s (\tilde P_k - \sum_j f_j\circ\ell_j)(x+rs) = 
h_k(x,r) = h(L_k(x),\rho r)$
can be exploited 
to obtain a bound for most $k$, while the sum of the measures of the remaining cubes 
$Q_k$ is small.
\end{proof}

Theorem~\ref{thm:periodicsublevelsets} is a stronger result,
in which $P-\sum_j f_j\circ\ell_j$ is regarded as taking
values in the quotient space $\reals/2\pi\integers$. Sublevel sets
are then typically larger, yet turn out to satisfy the same upper bounds.
Recall the notation $\norm{t} = \distance(t,2\pi\integers)$.
The following simple fact will be used in the proof of this theorem.

\begin{lemma} \label{lemma:corollaryofVdC}
There exists $C<\infty$ such that 
for any $C^2$ function $\phi:[0,1]\to\reals$ satisfying
$\phi'(t)\ge 1$ and $\phi''(t)\ge 0$ for all $t\in[0,1]$,
\[|\{t\in[0,1]: \norm{\phi(t)}\le\delta\}|\le C\delta\log(1/\delta)\]
for all $\delta\in (0,\tfrac12]$.
\end{lemma}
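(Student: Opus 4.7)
The plan is to reduce the problem to a one-dimensional monotone weighting via change of variables. Because $\phi'\ge 1>0$ and $\phi''\ge 0$, the map $\phi$ is a strictly increasing $C^1$-diffeomorphism of $[0,1]$ onto $[a,b]:=[\phi(0),\phi(1)]$, and $\phi'$ is nondecreasing. Setting $\psi(u):=1/\phi'(\phi^{-1}(u))$ on $[a,b]$, the substitution $u=\phi(t)$ gives
\[
|\{t\in[0,1]:\norm{\phi(t)}\le\delta\}|
=\int_a^b \mathbf{1}_{\{\norm{u}\le\delta\}}(u)\,\psi(u)\,du.
\]
The weight $\psi$ is the composition of a nonincreasing function with an increasing one, hence is itself nonincreasing; moreover $0<\psi\le 1$ and $\int_a^b\psi\,du=\int_0^1 dt=1$.

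Next I would decompose $\{\norm{u}\le\delta\}=\bigsqcup_k J_k$ with $J_k=[2\pi k-\delta,2\pi k+\delta]$, and let $\mathcal K$ be the set of $k$ with $J_k\cap[a,b]\ne\emptyset$. Monotonicity yields $\int_{J_k\cap[a,b]}\psi\le 2\delta\,M_k$, where $M_k:=\psi(\max\{a,2\pi k-\delta\})$. The key step is to control $\sum_{k\in\mathcal K}M_k$ uniformly in $\phi$. For those $k$ such that the preceding gap $G_k:=[2\pi(k-1)+\delta,2\pi k-\delta]$ is entirely contained in $[a,b]$, monotonicity of $\psi$ gives $\int_{G_k}\psi\ge|G_k|\,\psi(2\pi k-\delta)\ge\pi M_k$, using $|G_k|=2\pi-2\delta\ge\pi$ since $\delta\le\tfrac12$. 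The gaps $G_k$ are pairwise disjoint and contained in $[a,b]$, so summing produces $\sum_{k\in\mathcal K}M_k\le\pi^{-1}\int_a^b\psi\,du + O(1)=O(1)$, where the $O(1)$ accounts for at most two boundary values (the leftmost and rightmost $k\in\mathcal K$) whose adjacent gap may exit $[a,b]$; these boundary $M_k$ are trivially bounded by $1$.

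Combining, $|\{t:\norm{\phi(t)}\le\delta\}|\le 2\delta\sum_{k\in\mathcal K}M_k\le C\delta$, which is in fact strictly sharper than the claimed $C\delta\log(1/\delta)$. The only real point of care in the argument is the bookkeeping of the two boundary terms at $u=a$ and $u=b$, where a gap $G_k$ may extend outside $[a,b]$; since there are at most two such $k$ and each contributes an $M_k\le 1$, they are absorbed into the constant. The logarithmic factor in the statement is thus extraneous, an artifact of a cruder analysis---for instance, one that partitions $[0,1]$ dyadically according to the size of $\phi'$ and sums $2\delta/2^j$ across scales---but the weaker bound is evidently sufficient for the subsequent application to Theorem~\ref{thm:periodicsublevelsets}.
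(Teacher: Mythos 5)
Your argument is correct, and it takes a genuinely different route from the paper. The paper proves the lemma with oscillatory-integral machinery: van der Corput's lemma (using $\phi'\ge 1$ and $\phi'$ monotone) gives $\int_0^1 e^{in\phi(t)}\,dt = O((1+|n|)^{-1})$, the indicator of $\{\norm{\cdot}\le\delta\}$ is majorized by a periodic bump $\eta$ with $|\widehat\eta(n)|\le C\delta(1+\delta|n|)^{-2}$, and the resulting sum $\sum_n \delta(1+\delta|n|)^{-2}(1+|n|)^{-1}$ is estimated by $C\delta\log(1/\delta)$ --- the logarithm coming exactly from the harmonic sum over $1\le|n|\le\delta^{-1}$. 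Your real-variable proof (change of variables $u=\phi(t)$, monotonicity of the Jacobian weight $\psi=1/(\phi'\circ\phi^{-1})$ with $\int\psi=1$, and the disjoint-gaps bookkeeping to control $\sum_k M_k$) avoids Fourier analysis entirely and yields the sharper conclusion $C\delta$, confirming that the logarithm in the statement is an artifact of the Fourier-side estimate rather than a feature of the sublevel sets. All the steps check out: $\psi$ is indeed nonincreasing since $\phi''\ge 0$, the bound $|G_k|=2\pi-2\delta\ge\pi$ uses $\delta\le\tfrac12$, and the boundary terms are harmless (in fact a short computation shows at most one $k\in\mathcal K$ has $G_k\not\subset[a,b]$, but your cruder count of two changes nothing). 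Since the lemma is only used to prove \eqref{eq:periodicscalarsublevel}, where a power $\delta^c$ is all that is needed, either bound suffices for Theorem~\ref{thm:periodicsublevelsets}; your version is simply stronger and more elementary.
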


\begin{proof}
By van der Corput's lemma, $\int_0^1 e^{i\lambda\phi(t)}\,dt
= O(|\lambda|^{-1})$ as $|\lambda|\to\infty$, for $\lambda\in\reals$.
For any small $\delta>0$
there exists a nonnegative $2\pi$--periodic function $\eta$ which satisfies
$\eta(t)\ge 1$ whenever $\norm{t}\le\delta$,
and $|\widehat{\eta}(n)|\le 
C\delta (1+\delta|n|)^{-2}$ for 
all $n\in\integers$, with $C<\infty$ independent of $\delta$.
Then
\begin{align*}
\int_0^1 \eta(\phi(t))\,dt
&= \int_0^1 \sum_n \widehat{\eta}(n)e^{in\phi(t)}\,dt
\\
&\le \sum_{n\in\integers} 
C\delta (1+\delta|n|)^{-2}
(1+|n|)^{-1}
\\
&\le C\delta\log(1/\delta).
\end{align*}
\end{proof}

In the next lemma, $x\in\reals^d$, while $r\in\reals$.
\begin{lemma}
Let $h=h(x,r)$ be a polynomial $\sum_{k=0}^m p_k(x)r^k$
where the $p_k$ are polynomials in $x$, and $p_m$ is a nonzero
constant.
Fix a bounded ball $B$.
There exist
$c,C,A\in\reals^+$ such that for all $\delta\in(0,1]$
and all $\lambda\in\reals$ satisfying $|\lambda|\ge 1$,
\begin{equation} 
\label{eq:periodicscalarsublevel}
|\{(x,r)\in B\times(0,1]: \norm{\lambda h(x,r)}\le\delta r^A \}|
\le C\delta^c. 
\end{equation}
\end{lemma}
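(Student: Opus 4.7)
The plan is to fix $x \in B$ and $|\lambda|\ge 1$, bound the $r$-slice uniformly, then integrate in $x$ for a factor of $|B|$. Set $\phi(r)=\lambda h(x,r)$: a polynomial of degree $m$ in $r$ whose $m$-th derivative is the nonzero constant $m!\lambda p_m$, of absolute value at least $m!|p_m|$ since $|\lambda|\ge 1$. Decompose $(0,1]=\bigcup_{k\ge 0}I_k$ dyadically with $I_k=(2^{-k-1},2^{-k}]$. On $I_k$ we have $r^A\le 2^{-kA}$, so the sublevel set restricted to $I_k$ is contained in $\{r\in I_k:\norm{\phi(r)}\le \delta 2^{-kA}\}$. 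Rescaling $r=2^{-k}u$ with $u\in[\tfrac12,1]$ and $\psi(u)=\phi(2^{-k}u)$ gives $|\psi^{(m)}(u)|=\mu_k:=m!|\lambda p_m|\,2^{-km}$.

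To bound $|\{u\in[\tfrac12,1]:\norm{\psi(u)}\le\delta_k\}|$ with $\delta_k:=\delta\,2^{-kA}$, I imitate the proof of Lemma~\ref{lemma:corollaryofVdC}. Pick a nonnegative $2\pi$-periodic bump $\eta_k$ with $\eta_k(t)\ge 1$ for $\norm{t}\le\delta_k$ and $|\widehat{\eta_k}(n)|\le C\delta_k(1+\delta_k|n|)^{-2}$; then
\[
|\{u:\norm{\psi(u)}\le\delta_k\}|\le\int_{1/2}^1\eta_k(\psi(u))\,du=\sum_{n}\widehat{\eta_k}(n)\int_{1/2}^1 e^{in\psi(u)}\,du.
\]
The $n=0$ term contributes $O(\delta_k)$. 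For $n\ne 0$, van der Corput's lemma (in its order-$m$ form when $m\ge 2$, and trivially when $m=1$ since $\psi'$ is constant) gives $|\int_{1/2}^1 e^{in\psi(u)}\,du|\le C_m(|n|\mu_k)^{-1/m}$. Splitting the $n$-sum at $|n|\sim 1/\delta_k$, the total contribution is $O(\delta_k^{1/m}\mu_k^{-1/m})$, with at worst an additional $\log(1/\delta_k)$ factor when $m=1$.

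Converting $u$-measure to $r$-measure by multiplying by $2^{-k}$,
\[
|\{r\in I_k:\norm{\phi(r)}\le\delta r^A\}|\le C\delta\,2^{-k(A+1)}+C\delta^{1/m}|\lambda p_m|^{-1/m}\,2^{-kA/m}.
\]
Choose $A>m$, so both $\sum_k 2^{-k(A+1)}$ and $\sum_k 2^{-kA/m}$ are convergent geometric series. Since $|\lambda p_m|\ge|p_m|>0$ is a fixed positive constant, summing over $k$ gives a bound $\le C\delta^{1/m}$, uniform in $x\in B$ and in $|\lambda|\ge 1$. Integrating over $x\in B$ yields the lemma with, for instance, $c=1/m$ and $A=m+1$.

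The main obstacle is the small-$r$ (large-$k$) regime: the lower bound $\mu_k$ on $|\psi^{(m)}|$ degrades like $2^{-km}$, so the raw van der Corput estimate may exceed the trivial bound $|I_k|$. The remedy, exploited above, is to use the extra decay of the tolerance $r^A$: with $A$ chosen larger than $m$, the $r$-measure contribution from $I_k$ decays like $2^{-kA/m}$, which is summable. One should also check that the implicit constants are independent of the lower-order coefficients $p_0(x),\ldots,p_{m-1}(x)$; they are, since $\psi^{(m)}$ depends only on $p_m$ and $\lambda$. An alternative would be to iterate first-derivative arguments $m$ times in the style of Lemma~\ref{lemma:corollaryofVdC}, but the above route is more direct.
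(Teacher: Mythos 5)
Your argument is correct, and it is a legitimate, self-contained proof of the lemma. The paper itself gives no written proof --- it only says the statement ``can be deduced from the preceding lemma'' (the $C^2$ estimate with $\phi'\ge 1$, $\phi''\ge 0$, and a $\delta\log(1/\delta)$ loss) and leaves the details to the reader. You do not actually route through that lemma: instead you reproduce its periodization mechanism (nonnegative $2\pi$-periodic bump with $|\widehat{\eta}(n)|\le C\delta(1+\delta|n|)^{-2}$, then sum the Fourier series) but replace the first-order van der Corput input by the order-$m$ van der Corput estimate, exploiting that $\partial_r^m(\lambda h)=m!\lambda p_m$ is a constant of modulus $\ge m!|p_m|$ uniformly in $x\in B$ and $|\lambda|\ge 1$. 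Combined with the dyadic decomposition in $r$ and the shrinking tolerance $\delta r^A$, this is exactly the right way to absorb the $2^{-km}$ degradation of the derivative bound near $r=0$, and Fubini in $x$ finishes it. The intended route via the quoted first-derivative lemma would instead require splitting $(0,1]$ according to where $|\partial_r(\lambda h)|$ is large and estimating the complementary sublevel set of $\partial_r(\lambda h)$; your version avoids that case analysis at the cost of invoking the higher-order van der Corput inequality. Two cosmetic points: the condition $A>m$ is not actually needed --- any $A>0$ makes both geometric series $\sum_k 2^{-k(A+1)}$ and $\sum_k 2^{-kA/m}$ converge, since the factor $2^{k}$ from $\mu_k^{-1/m}$ is exactly cancelled by the Jacobian $2^{-k}$ --- and in the case $m=1$ the residual $\log(1/\delta_k)$ factor means you should take $c$ strictly less than $1$ (e.g.\ $c=\tfrac12$) rather than $c=1/m=1$; both are harmless since the lemma only asks for some $c>0$.
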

This can be deduced from the preceding lemma. The details are left to the reader.
 
\begin{proof}[Outline of proof of Theorem~\ref{thm:periodicsublevelsets}]
This follows from a small modification of the arguments already indicated.
If $P$ is nondegenerate then after a change of variables,
all the $\ell_j$ can be represented by matrices with integer entries,
and there exist a finite set $S\subset\integers^d$
and coefficients $c_s\in\integers$
satisfying \eqref{discrete1} and \eqref{discrete2}. 
Indeed, the construction already given yields a finite witness
set $S\subset\integers^d$.
For such a set, the vector space of all  $(c_s)_{s\in S}\in \reals^{|S|}$
satisfying \eqref{discrete2} is the null space of a certain
matrix with integer entries, hence is spanned over $\reals$
by elements of $\integers^{S}$.

By taking the coefficients $c_s$ to be integers and repeating the
above reasoning as in the above discussion of sublevel sets, we conclude that
$E^\dagger_{\eps,\lambda}$
can contain no finite point configuration
$x+rS$
for which the pair $(x,r)$ satisfies 
\begin{equation} \label{modifiedexclusion}
\distance(\lambda h(x,r),2\pi \integers)\ge C\eps, 
\end{equation}
where $h(x,r) = \sum_{s\in S} c_s P(x+rs)$ 
is a polynomial in $r$ of positive degree, whose coefficients
are polynomial functions of $x$ and whose leading coefficient
is independent of $x$.

The set of all $(x,r)$ satisfying \eqref{modifiedexclusion}
is more complicated than the corresponding set in the proof
of Proposition~\ref{prop:sublevel1}, so some additional 
preparation is needed before the theorem of Furstenberg
and Katznelson can be applied.
Fix any bounded set $B\subset\reals^d$.

A short calculation using 
\eqref{eq:periodicscalarsublevel}
with the substitutions $r=tj/N$
and $\delta r^A = N^{-A-A'}$ shows that
there exist $A<\infty$  and $c>0$ such that
whenever $|\lambda|\ge 1$,
for any $N\ge 1$ and 
any $j\in\{1,2,\cdots,N\}$,
\[
|\{(x,t)\in B\times[\tfrac12,1]: \norm{\lambda h(x,tj/N)} \le N^{-A'-A} \}|
\le CN^{1-cA'}.
\]
Choose $A'$ so that $N^{2-cA'}\equiv N^{-1}$.
By applying Fubini's theorem and taking unions of exceptional sets
over all the $N$ parameters $j\in\{1,2,\cdots,N\}$, 
we lose a factor of $N$ and hence conclude that
for any $|\lambda|\ge 1$,
there exists $t\in[\tfrac12,1]$
such that
\[
\norm{\lambda h(x,tj/N)} \ge cN^{-A-A'}
\text{ for all $j\in\{1,2,\cdots,N\}$ and
all $x\in B\setminus\scripte$,}
\]
where the exceptional set $\scripte$ satisfies 
\[ |\scripte|\le CN^{2-cA'}=CN^{-1}. \] 

In combination with \eqref{modifiedexclusion},
this permits the theorem of Furstenberg and Katznelson
to be applied, in the same spirit as in the proof of 
Proposition~\ref{prop:sublevel1} above.
\end{proof}



A strong variant, to the effect that
there is a uniform sublevel set estimate of this form with $\lambda=1$ for
all polynomials $P$ of bounded degree that are uniformly nondegenerate, 
follows in the same way.
A key point is that the proof of Theorem~\ref{thm:discretenondegen}
produces a finite witness set $S$ which is independent of $P$,
so long as $P$ has bounded degree.

\begin{remark}
It remains an open question whether the multilinear oscillatory
integral inequalities \eqref{decay} or \eqref{slowerdecay}
hold for all nondegenerate polynomial phases $P$, without additional hypotheses.
In the rationally commensurate case, 
Theorem~\ref{thm:periodicsublevelsets}
does rule out certain {\em strong} counterexamples to \eqref{slowerdecay}. 
Such a strong counterexample, for some sequence of values of $\lambda$
tending to $+\infty$, has each function $f_j(y)= f_{j,\lambda}(y)$ of the
form $f_{j,\lambda}(y) = e^{-i\phi_{j,\lambda}(y)}$ for some measurable real-valued 
phase $\phi_{j,\lambda}$,
with the phases satisfying $\distance(\lambda P(y)-\sum_j \phi_{j,\lambda}(\ell_j(y)),2\pi\integers)
< \delta(\lambda)$ for all $y$ outside a set $E_\lambda$,
where $\delta(\lambda)\to 0$ as $\lambda\to+\infty$ through the given sequence, 
and the measure $d\mu(y)=\eta(y)\,dy$ satisfies $\mu(E_\lambda)\to 0$
as $\lambda\to+\infty$.
In such a situation, the $\scripti_\lambda(f_1,\cdots,f_n)$ would not tend
to zero.
\end{remark}

\section{Discussion} \label{section:bilinearcase}

\noindent{\bf Bilinear case.}\ 
The bilinear case has been intensively studied. 
Recall first the nonsingular situation, in which
the mapping $y\mapsto (\ell_1(y),\ell_2(y))$
of $\reals^d$ to $\reals^{d_1}\times\reals^{d_2}$ is a bijection.
In this case, 
$\scripti_\lambda(f_1,f_2)$ can be written as
\[\int e^{i\lambda P(x,y)}f(x)g(y)\eta(x,y)\,dx\,dy,\]
where $P$ is a real-valued polynomial.
A necessary and sufficient for \eqref{decay}
is that there exist nonzero multi-indices
$\alpha,\beta$ for which
$\partial^{\alpha+\beta} P/\partial x^\alpha\partial y^\beta$
does not vanish identically; equivalently, $P$ is not
a sum of one function of $x$ plus another function of $y$.

Next consider the singular bilinear situation. $\scripti_\lambda(f,g)$
can always be expressed in the form
$\int e^{i\lambda P(x,y,z)}f(x,z)g(y,z)\eta(x,y,z)\,dx\,dy\,dz$,
where $x,y$ range over Euclidean spaces of arbitrary dimensions,
and $z$ over a space of positive dimension.
Such an expression satisfies \eqref{decay}
if and only if there exist $\alpha,\beta\ne 0$ such that
$\partial^{\alpha+\beta} P/\partial x^\alpha \partial y^\beta$
does not vanish identically as a function of all three variables.
For on one hand, if all such mixed partial derivatives
do vanish identically, then $P(x,y,z)$ can be decomposed in the form
$p(x,z)+q(y,z)$. The resulting factors $e^{i\lambda p}$ and $e^{i\lambda q}$
can be incorporated into $f,g$ respectively, and there is consequently no 
valid inequality \eqref{decay}.
On the other hand, if some such mixed partial derivative 
does not vanish identically, then
integration with respect to $x,y$ for fixed $z$ sets up a
nonsingular problem for a bilinear form. The 
result of the preceding paragraph gives a bound 
\[C\min\big(1, |Q(z)|^{-1}|\lambda|^{-\delta}\big)
\norm{f(\cdot,z)}_\infty\norm{g(\cdot,z)}_\infty\]
for some exponent $\delta>0$ and some polynomial $Q$ which does not
vanish identically. \eqref{decay} easily follows by integration with
respect to $z$.

The conclusion is that any formally singular bilinear situation
can be reduced to nonsingular ones by freezing some of the coordinates,
exploiting oscillation, then integrating with respect to the frozen coordinates.

\medskip
\noindent{\bf Higher order case.}\ 
In contrast,
the singular multilinear forms of higher order 
studied in this paper are not in general
reducible to nonsingular ones in this way. 
As an example,
define $P:\reals^3\to\reals$ to be
$P(x_1,x_2,x_3)=x_3^2$. 
Fix a large positive integer $N$, to be specified below.
For $j\in\{1,2,3,\cdots,N\}$
choose nonzero unit vectors $v_j = (v_j^1,v_j^2,v_j^3)\in\reals^3$,
none of which is a scalar multiple of another,
all satisfying 
\begin{equation} \label{wave}
(v_j^3)^2 = (v_j^1)^2+(v_j^2)^2.
\end{equation}
Define 
$\ell_j(x) = x\cdot v_j = x_1v^1_j+x_2v^2_j+x_3v^3_j$,
and consider 
\[\scripti_\lambda(f_1,f_2,\cdots,f_N)
= \int_{\reals^3}e^{i\lambda P(x)}\prod_{j=1}^N f_j(\ell_j(x))\,\eta(x)\,dx\]
where $\eta\in C^\infty_0$ is a cutoff function which does not vanish
identically.
This multilinear operator is singular, since the integral is taken
over $\reals^3$ but the sum of the dimensions of the target spaces
of the mappings $\ell_j$ is $N$.
The differential operator
$L = \frac{\partial^2}{\partial x_3^2}
-\frac{\partial^2}{\partial x_1^2}-\frac{\partial^2}{\partial x_2^2}$
annihilates $f_j\circ\ell_j$ for all $j$, by virtue of the equations
\eqref{wave}, but does not annihilate $P$.
Therefore $P$ is nondegenerate relative to $\{\ell_j: 1\le j\le N\}$.

Consider the 
restriction of $P$ and the $\ell_j$ to any two-dimensional affine
subspace $V$. 
Let $w_j$ be the projection
of $v_j$ onto the unique parallel translate of $V$ which contains $0$. 
For $x\in V$, $x\cdot v_j = x\cdot w_j$ plus a constant independent of $x$.
$V$ may identified with $\reals^2$, and
the integral over $V$ is then expressed as
$\int_{\reals^2}e^{iQ(y)}\prod_{j=1}^N (\tilde f_j(y\cdot w_j))
\,\tilde\eta(y)\,dy$,
where the phase $Q$ is a {\em quadratic} polynomial.

We claim that $\{v_j\}$ can be chosen
so that for every affine two-dimensional subspace $V$
of $\reals^3$, $P|_V$ is degenerate, relative to
$\{\ell_j|_V\}$.
Thus there is no inequality of the form \eqref{decay}, 
nor any sublevel set bound of the form 
\eqref{eq:mainbound},\eqref{eq:thetatozero},
relative to $V$.
Therefore the method of reduction to lower dimension
by ``slicing'' is not applicable.

To establish the claim, 
observe first that 
any quadratic polynomial $Q:\reals^2\to\reals$ is 
necessarily degenerate,
relative to any family of three or more mappings
of the form $L_j(y)=y\cdot w_j$ which satisfy the requirement
that none of the vectors $w_j$ is a scalar multiple of any of the others.
This is seen by permuting the indices $j$ and changing coordinates 
so that $w_1=(1,0)$, $w_2=(0,1)$, and $w_3=(a,b)$ with both $a,b$ nonvanishing.
Then any quadratic polynomial in $(y_1,y_2)$ can be
expressed as a linear combination of $\{y_1,y_2,y_1^2, y_2^2, (ay_1+by_2)^2\}$.

It remains only to show that $N$ and $\{v_j: 1\le j\le N\}$ can
be chosen both to satisfy \eqref{wave} and
so that for every two-dimensional subspace $V\subset\reals^3$,
some subcollection of three of the associated vectors $\{w_j: 1\le j\le N\}$ 
has no element equal to a scalar multiple of any other element.
Define 
\[
v_j =(v^1_j,v^2_j,v^3_j) = 2^{-1/2}(\cos(2\pi/N),\sin(2\pi/N),1).
\]
If $N$ is sufficiently large then the required property clearly holds,
for otherwise one can obtain a contradiction by letting $N\to\infty$
and exploiting the compactness of the Grassmann manifold of all subspaces
$V$.

\medskip
\noindent {\bf Slack.}\ 
There are at least two places in the analysis at which available information
has been only partially exploited.  Firstly, only translates and dilates of a single
finite point configuration were used to establish the sublevel set bounds,
while our algebraic discussion showed that translates and dilates
of a rather large family of configurations are actually excluded from
sublevel sets.
Secondly, the identity 
$\sum_s c_s (P-\sum_j f_j\circ\ell_j)(x+rs) =h(x,r)$
was used merely to obtain an inequality
$\sum_s |(P-\sum_j f_j\circ\ell_j)(x+rs)|\ge c |h(x,r)|$.

\medskip
\noindent{\bf Connection with Gowers uniformity norms.}\ 
The first step in the algebraic proof of the existence of finite witness sets
is to apply a finite difference operator to $P-\sum_j (f_j\circ\ell_j)$
which annihilates $e^{i\lambda P}$ and every term $f_j\circ\ell_j$ but a single one.
This operation has an analytic counterpart for multilinear oscillatory integrals. 
Write $\scripti_\lambda(f_1,\cdots,f_m) = \langle T_\lambda(f_1,\cdots,f_{m-1}),\,f_m\rangle$
for  certain multilinear operators $T_\lambda$. 
Then $|\scripti_\lambda(f_1,\cdots)|^2\lesssim \norm{f_m}_{\lt}^2
\int |T_\lambda(f_1,\cdots,f_{m-1})|^2$, and it suffices to
obtain an upper bound for the integral. This 
leads to the elimination of $f_m$,
the replacement of $P$ by a polynomial of lower degree, 
and the replacement of each remaining $f_j$ by $f_j(y+\ell_j(v))\overline{f_j(y)}$,
with an additional integration 
with respect to $v\in \text{nullspace}\,(\ell_m)$. 
One can iterate this operation until only $f_1$ remains, and then if necessary,
iterate finitely many additional times until no oscillatory factor $e^{i\lambda P}$ remains.

Suppose for simplicity that all the target spaces $\reals^{d_j}$ are one-dimensional.
One then obtains a bound of the type \eqref{slowerdecay} unless  
the Gowers uniformity norm $\norm{f_1}_{U^k(\reals^{d_1})}$
is bounded below by $\eta(\lambda)\norm{f_1}_{L^\infty}$
for a certain $k$, where $\eta(\lambda)\to 0$ very slowly as $|\lambda|\to\infty$.
The index $k$ which arises depends both on the degree of $P$,
and on the number of functions $f_j$.
This argument applies for each index $j$, so by multilinearity,
the estimation of $\scripti_\lambda$ reduces to the case
in which none of the functions $f_j/\|f_j\|_{L^\infty}$ has very small uniformity norm.

Thus an appropriate description of functions whose
uniformity norms are not small should lead to a proof of \eqref{slowerdecay}.
Certain descriptions are now known \cite{GTZ1},\cite{GTZ2}.
Their thrust is that if $\norm{f}_{U^k}$
is not small relative to $\norm{f}_{L^\infty}$, then
$f$ can be decomposed into a controlled sum of functions which
resemble $e^{iQ}$ for polynomials $Q$ of bounded degree,
plus a remainder with small $U^k$ norm.  
The results available when this paper was written were apparently quantitatively too weak to 
yield power decay bounds, but the more recent inverse theorems of Green, Tao, and Ziegler 
\cite{GTZ1},\cite{GTZ2} may be  more fruitful.
See \cite{taovu} for an introduction to these matters.

\begin{thebibliography}{15}

\bibitem{bcct1}
J.~Bennett, A.~Carbery, M.~Christ and T.~Tao,
{\em The Brascamp-Lieb inequalities: finiteness, structure and extremals},  
Geom. Funct. Anal.  17  (2008),  no. 5, 1343--1415.

\bibitem{bcct2}
\bysame,
{\em Finite bounds in H\"older-Brascamp-Lieb multilinear inequalities},
to appear, Math.\ Research Letters.
math.CA/0505691

\bibitem{ccw}
A.~Carbery, M.~Christ, and J.~Wright,
{\em Multidimensional van der Corput and sublevel set estimates},
J. Amer. Math. Soc. 12 (1999), no. 4, 981--1015.

\bibitem{trilinear}
M.~Christ,
{\em On the simplest trilinear operators},
Math. Research Letters 8 (2001), 43-56.

\bibitem{christreduction}
\bysame,
{\em Multilinear oscillatory integrals via reduction of dimension},
preprint.

\bibitem{christdosilva}
M.~Christ and D.~Oliveira do Silva,
{\em On trilinear oscillatory integrals},
preprint.

\bibitem{multiosc}
M.~Christ, X.~Li, T.~Tao, and C.~Thiele,
{\em On multilinear oscillatory integrals, nonsingular and singular},
Duke Math. J.  130 (2005), 321--351.



\bibitem{furstkatz}
H.~Furstenberg and Y.~Katznelson,
{\em An ergodic Szemer\'edi theorem for commuting transformations},
J. Analyse Math. 34 (1978), 275--291.

\bibitem{greenblattsmooth}
M.~Greenblatt,
{\em Simply nondegenerate multilinear oscillatory integral operators with smooth phase}, 
Math. Res. Lett. 15 (2008), no. 4, 653--660. 

\bibitem{GTZ1}
B.~Green, T.~Tao, and T.~Ziegler,
{\em An inverse theorem for the Gowers $U^{s+1}[N]$-norm (announcement)},
arXiv:1006.0205, math.NT (math.DS).

\bibitem{GTZ2}
\bysame,
{\em An inverse theorem for the Gowers $U^{s+1}[N]$-norm},
arXiv:1009.3998, math.CO (math.DS).




\bibitem{steinfatbook}
E.~M.~Stein,
{\em Harmonic analysis: real-variable methods, orthogonality, and oscillatory integrals},
With the assistance of Timothy S. Murphy. Princeton Mathematical Series, 43. Monographs in Harmonic Analysis, III. Princeton University Press, Princeton, NJ, 1993.

\bibitem{taovu}
T.~Tao and V.~Vu,
{\em Additive Combinatorics}, 
Cambridge Studies in Advanced Mathematics, 105. Cambridge University Press, Cambridge, 2010.

\end{thebibliography}
\end{document}